\def\Z{\mathbb Z}
\def\Q{\mathbb Q}
\newcommand{\dR}{\mathrm{dR}}
\newcommand{\colim}{\mathrm{colim}}
\newcommand{\rk}{\mathrm{rk}}
\newcommand{\Lie}{\mathrm{Lie}}
\newcommand{\ord}{\mathrm{ord}}
\newcommand{\val}{\mathrm{val}}
\theoremstyle{plain}
\newtheorem{theorem}{Theorem}
\newtheorem{lemma}{Lemma}
\newtheorem{proposition}{Proposition}
\newtheorem{corollary}{Corollary}
\theoremstyle{definition}
\newtheorem{definition}{Definition}
\newcommand{\Gal}{\mathrm{Gal}}
\newcommand{\nr}{\mathrm{nr}}
\newcommand{\F}{\mathbb{F}}
\newcommand{\Mat}{\mathrm{Mat}}
\newcommand{\red}{\mathrm{red}}
\newcommand{\tor}{\mathrm{tor}}
\newcommand{\divv}{\mathrm{div}}
\newcommand{\nab}{\nabla_{0}^{1}}
\begin{document}
\title{A Buium--Coleman Bound for the Mordell--Lang conjecture}
\author[N. Dogra]{Netan Dogra}
\author[S. Pandit]{Sudip Pandit}
\maketitle

\begin{abstract}
For $X$ a hyperbolic curve of genus $g$ with good reduction at $p\geq 2g$, we give an explicit bound on the Mordell--Lang locus $X(\mathbb{C})\cap \Gamma $, when $\Gamma \subset J(\mathbb{C})$ is the divisible hull of a subgroup of $J(\Q _p ^{\mathrm{nr}})$ of rank less than $g$. Without any assumptions on the rank (but with all the other assumptions) we show that $X(\mathbb{C})\cap \Gamma $ is unramified at $p$, and bound the size of its image in $X(\overline{\F }_p )$. As a corollary, we obtain a new proof that Mordell implies Mordell--Lang for curves.
\end{abstract}
\section{Introduction}
Let $p$ be a prime number. Let $R$ be the ring of integers of the maximal unramified extension $\Q _p ^{\mathrm{nr}}$ of $\Q _p $, $X/R$ a smooth proper geometrically irreducible curve of genus $g>1$, and $\iota :X\to J$ the Abel--Jacobi morphism defined by sending a point $b\in X(\Q _p ^{\nr})$ to $0\in J(\Q _p ^{\nr })$. Given a subset $S$ of an abelian group $G$, let $S_{\divv }$ denote the divisible hull of $S$, i.e.
\[
S_{\divv}:=\{g\in G, \exists n\in \Z _{>0},n\cdot g\in S \}.
\] 
Let $\Gamma <J(\overline{\Q }_p )$ be a subgroup such that $\dim _{\Q }\Gamma \otimes _{\mathbb{Z}}\Q =r$. We say that $\Gamma \otimes _{\Z }\Q $ has a basis defined over $L$ if there is a subgroup $H\subset \Gamma \cap J(L)$ such that the map $H\otimes _{\Z }\Q \to \Gamma \otimes _{\Z }\Q $ is surjective (or equivalently $\Gamma $ is contained in the divisible hull of $J(L)$). In this paper we prove the following theorem.
\begin{theorem}\label{thm:main1}
\begin{enumerate}
\item Let $\Gamma <J(\overline{\Q }_p )$ be a subgroup such that $\dim _{\Q }\Gamma \otimes _{\mathbb{Z}}\Q =r$. If $\Gamma \otimes _{\mathbb{Z}}\Q $ has a basis defined over $\Q _p ^{\mathrm{nr}}$, then
\[
\# \red (X(\Q _p ^{\mathrm{nr}})\cap \Gamma )\leq p^{3g+r}3^g [p(2g-2)+6g]g! .
\]
where 
\[
\red :X(\overline{\Q }_p )\to X(\overline{\F }_p )
\]
denotes the reduction map.
\item If $p\geq 2g$, then
\[
X(\overline{\Q }_p )\cap J(\Q _p ^{\mathrm{nr}})_{\divv} \subset X(\Q _p ^{\mathrm{nr}}).
\]
\item If $p\geq 2g$, $\Gamma $ is as in part (1), and $r<g$, then
\[
\# X(\overline{\Q }_p )\cap \Gamma \leq p^{3g+r}3^g [p(2g-2)+6g]g! +2r.
\]
\end{enumerate}
\end{theorem}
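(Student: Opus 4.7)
My plan is to reduce to parts (1) and (2), then control the number of extra points per residue disk by a Chabauty--Coleman argument exploiting the rank hypothesis $r<g$.

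Since $\Gamma \otimes _{\Z }\Q $ has a basis defined over $\Q _p ^{\nr}$, every $\gamma \in \Gamma $ satisfies $n\gamma \in J(\Q _p ^{\nr})$ for some $n\in \Z _{>0}$, so $\Gamma \subset J(\Q _p ^{\nr})_{\divv }$. Combined with part (2), whose hypothesis $p\geq 2g$ is in force, this gives
\[
X(\overline{\Q }_p )\cap \Gamma \subset X(\overline{\Q }_p )\cap J(\Q _p ^{\nr})_{\divv }\subset X(\Q _p ^{\nr}),
\]
so it suffices to bound $\# S$ where $S:=X(\Q _p ^{\nr})\cap \Gamma $. Writing $n_{\bar x }:=\#\, \red ^{-1}(\bar x )\cap S$ for each $\bar x \in \red (S)$, one has
\[
\# S=\# \red (S)+\sum _{\bar x \in \red (S)}(n_{\bar x }-1),
\]
and part (1) bounds $\# \red (S)$ by $p^{3g+r}3^g [p(2g-2)+6g]g!$. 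The remaining task is therefore to prove $\sum (n_{\bar x }-1)\leq 2r$.

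This is where I would invoke Chabauty--Coleman, using that $r<g$. The $\Q _p ^{\nr}$-subspace $V\subset \Lie (J)\otimes \Q _p ^{\nr}$ spanned by the abelian logarithms $\log _J(\Gamma )$ has dimension at most $r$, so its annihilator $V^{\perp }\subset H^0(J,\Omega ^1)$ has dimension at least $g-r\geq 1$. Any $\omega \in V^{\perp }$ yields, via Coleman integration, a locally analytic function on $X(\Q _p ^{\nr})$ vanishing identically on $S$. A single such $\omega $ together with the standard Newton-polygon estimate already gives the crude Chabauty bound $\sum (n_{\bar x }-1)\leq 2g-2$, essentially because the number of zeros of the Coleman primitive in a residue disk exceeds $1$ by at most the vanishing order of $\omega $ at $\bar x $, and these orders sum to $2g-2$ over $X_{\overline{\F }_p }$.

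Sharpening this to $2r$ is the main obstacle. My plan is to exploit the full $(g-r)$-dimensional Chabauty space $V^{\perp }$ together with the internal structure of $\Gamma $: the ``excess'' points in a residue disk contribute nonzero elements to $\Gamma \cap \hat J(\Q _p ^{\nr})$, where $\hat J$ is the formal group of $J$ (torsion-free under the running hypothesis $p\geq 2g$), and this intersection has rank at most $r$ as a $\Z $-module. Tracking how many independent excess contributions can arise over all residue disks, while using the additional $g-r$ differentials to cancel the $2g-2-2r$ ``generic'' zeros that are not forced by $\Gamma $, should yield the desired $2r$ bound and complete the proof of (3).
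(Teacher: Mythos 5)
Your overall framework is correct and matches the paper's: use part (2) to reduce to $\Q_p^{\nr}$-points, use part (1) to bound the number of occupied residue classes, and then bound the excess points per residue disk by a Chabauty--Coleman count using the $(g-r)$-dimensional space of vanishing differentials $\Gamma^\perp$. Your decomposition $\#S = \#\red(S) + \sum_{\bar x}(n_{\bar x}-1)$ and the crude bound $\sum(n_{\bar x}-1)\leq 2g-2$ via a single $\omega\in\Gamma^\perp$ are both right.

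The genuine gap is exactly where you flag the ``main obstacle'': sharpening $2g-2$ to $2r$. You propose to track ``independent excess contributions'' to $\Gamma\cap\widehat{J}(\Q_p^{\nr})$, using that this intersection has rank at most $r$. This does not obviously work: knowing that the differences of excess points span a rank-$\leq r$ subgroup of the formal group does not directly bound the \emph{total count} of excess points (a single residue disk could in principle contribute many excess points all lying in a rank-one subgroup, and the vanishing-order count per disk is controlled by the differential, not by the rank of the local contribution). The paper instead invokes a precise statement, Stoll's Theorem 6.4 in \cite{independence}: for a $(g-r)$-dimensional subspace $V\subset H^0(X,\Omega)$, setting $n(s):=\min\{\ord_s\overline{\omega}:\omega\in V- pV\}$, one has $\sum_{s}n(s)\leq 2r$. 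This is a nontrivial Riemann--Roch/Clifford-type estimate about the subspace $V$ itself, independent of any group-theoretic structure of $\Gamma$. Once this lemma is in hand, combining it with Coleman's per-disk estimate $\#X(\Q_p^{\nr})\cap\Gamma\cap]s[-1\leq n(s)$ immediately gives $\sum(n_{\bar x}-1)\leq 2r$ and the proof closes. Your proposal identifies the need for such a lemma but does not supply it, and the replacement heuristic you sketch would need a substantially different (and currently unconvincing) argument to deliver the bound.
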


As an application, we deduce the following.
\begin{corollary}\label{cor:1}
Let $K$ be a number field and $X/K$ a smooth projective geometrically irreducible curve of genus $g>1$ with a $K$-rational point $b$. Let $\Gamma <J(\overline{K})$ be a subgroup such that $\Gamma \otimes _{\Z }\Q $ is finite dimensional and let $\iota :X\to J$ denote the Abel--Jacobi morphism sending $b$ to $0$. Then there is a finite extension $L/K$ such that
\[
\iota (X(\overline{K}))\cap \Gamma \subset \iota (X(L)).
\]
\end{corollary}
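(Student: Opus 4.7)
The plan is to use parts (1) and (2) of Theorem~\ref{thm:main1} together to bound both the degree and the ramification of the fields of definition $L_0(x)$ of points $x\in X(\overline K)\cap \iota^{-1}(\Gamma)$, and then invoke Hermite--Minkowski. Replacing $\Gamma$ with the divisible hull of the group generated by the $\Gal(\overline K/K)$-orbit of a basis of $\Gamma\otimes\Q$, which is still of finite $\Q$-rank since each generator has finite Galois orbit, we may assume $\Gamma$ is Galois-stable; it suffices to prove the statement for this enlarged $\Gamma$. Pick a finite extension $L_0/K$ such that $\Gamma\otimes \Q$ has a basis in $J(L_0)$, so $\Gamma\subset J(L_0)_{\divv}$, and choose a prime $p\geq 2g$ at which $X/L_0$ has good reduction at every place above $p$ and $L_0/\Q$ is unramified (all but finitely many $p$ qualify). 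Fix an embedding $\overline K\hookrightarrow \overline{\Q}_p$ extending $L_0\hookrightarrow L_{0,v_0}\subset \Q_p^{\nr}$; under it, $\Gamma\subset J(\Q_p^{\nr})_{\divv}$.

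\textbf{Unramifiedness of $L_0(x)$.} For $x\in X(\overline K)\cap \iota^{-1}(\Gamma)$ and any $\sigma\in \Gal(\overline K/L_0)$, Galois-stability of $\Gamma$ gives $\sigma(x)\in \iota^{-1}(\Gamma)$, and Theorem~\ref{thm:main1}(2) then forces $\sigma(x)\in X(\Q_p^{\nr})$ under the fixed embedding. Varying $\sigma$, every embedding of $L_0(x)$ into $\overline{\Q}_p$ extending $L_0\hookrightarrow L_{0,v_0}$ lands in $\Q_p^{\nr}$, so every place of $L_0(x)$ above $p$ is unramified. Running this at every $p$ outside a fixed finite exceptional set (bad reduction of $X/L_0$, $p<2g$, or $L_0/\Q$ ramified at $p$) shows that $L_0(x)/L_0$ is unramified outside a fixed finite set $S_0$ of primes of $L_0$.

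\textbf{Degree bound.} Fix one good prime $p$ as above. Theorem~\ref{thm:main1}(1) bounds $\#\red(X(\Q_p^{\nr})\cap \Gamma)\leq C$ for the explicit $C=p^{3g+r}3^g[p(2g-2)+6g]g!$. The $[L_0(x):L_0]$ Galois conjugates of $x$ all lie in $\iota^{-1}(\Gamma)$, so their embedded images lie in $X(\Q_p^{\nr})\cap \iota^{-1}(\Gamma)$ and reduce into this bounded set. Pick a primitive element $\theta\in L_0(x)$ for $L_0(x)/L_0$ that is integral over $\mathcal{O}_{L_0}$: by the unramifiedness at $v_0$, its minimal polynomial reduces to a separable polynomial mod $v_0$, so the reductions of the $[L_0(x):L_0]$ Galois conjugates of $\theta$ are distinct in $\overline{\F}_p$. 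Since $\theta$ is a fixed $\mathcal{O}_{L_0}$-polynomial in the coordinates of $x$, reduction commutes with this polynomial, and distinct reductions of conjugates of $\theta$ force distinct reductions of the corresponding conjugates of $x$. Hence $[L_0(x):L_0]\leq C$.

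\textbf{Conclusion.} The fields $L_0(x)$ for $x\in X(\overline K)\cap \iota^{-1}(\Gamma)$ thus have degree $\leq C$ over $L_0$ and are unramified outside $S_0$. By Hermite--Minkowski, only finitely many such subfields of $\overline K$ exist, so their compositum $L$ is a finite extension of $K$ containing every $L_0(x)$, yielding $\iota(X(\overline K))\cap \Gamma\subset \iota(X(L))$. The crux is the degree bound, which requires both parts of Theorem~\ref{thm:main1} in tandem: the residue bound from (1), together with the injectivity of reduction on Galois orbits that comes from the unramifiedness supplied by (2).
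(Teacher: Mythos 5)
The unramifiedness step of your argument is sound and is essentially what the paper does with part~(2) plus Hermite--Minkowski. The degree bound, however, has a genuine gap, and it is precisely the difficulty that forces the paper into its much more elaborate Ax--Schanuel argument.

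Your degree bound rests on the claim that, because $L_0(x)/L_0$ is unramified at $v_0$, an integral primitive element $\theta$ that is a fixed $\mathcal{O}_{L_0}$-polynomial in the coordinates of $x$ has minimal polynomial with separable reduction mod $v_0$, so that the $[L_0(x):L_0]$ conjugates of $\theta$ (and hence of $x$) have pairwise distinct reductions. This is false. Unramifiedness of the extension does not control the discriminant of an arbitrary integral primitive element: take $L_0=\Q$, $p=3$, $L_0(x)=\Q(\sqrt 2)$ (unramified at $3$), and $\theta = 1+3\sqrt 2$; its minimal polynomial $T^2-2T-17$ reduces to $(T-1)^2$ mod $3$, so the two conjugates have the same reduction. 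Worse, the constraint that $\theta$ be a polynomial in the coordinates of $x$ forces $\overline{\theta}$ to lie in the residue field $\F_p(\overline{x})$ generated by the reduction of $x$, and this field can be strictly smaller than the residue field of $L_0(x)$ at $v_0$ (as in the example above, where $\overline{x}=1$ generates only $\F_3$ while the residue field of $\Q(\sqrt2)$ at $3$ is $\F_9$). When that happens, \emph{no} choice of $\theta$ satisfying your constraints can have the required separable reduction, because the conjugates of $x$ itself genuinely collide mod $p$. In short: parts~(1) and~(2) bound the number of residue disks met, and show those disks contain only unramified points, but they say nothing about how many Galois conjugates of $x$ can pile into a single residue disk, so $[L_0(x):L_0]\leq C$ does not follow.

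The paper sidesteps this by not trying to bound $[L_0(x):L_0]$ pointwise at all. Instead it shows (Propositions~\ref{prop:3}, \ref{prop:4}) that the set of tuples of Galois conjugates of points of $X\cap\Gamma$ is not Zariski dense in $X^{nm}$, using the low-rank relation coming from $\rk J(K)=d$ and a $p$-adic Ax--Schanuel theorem applied inside the finitely many residue polydisks supplied by parts~(1) and~(2); an inductive projection argument then extracts the uniform degree bound. That extra machinery is exactly what is needed to handle the possibility of several conjugates sharing a residue disk, which your argument overlooks.
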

Of course this corollary is a special case of Faltings' proof of the Mordell--Lang conjecture \cite{faltings:general}. 
Corollary \ref{cor:1} is also proved by Raynaud in the same paper in which he proves the Manin--Mumford conjecture \cite{Ray1}. Our proof has some common features with Raynaud's proof. He also proves finiteness of the image of $X(\Q _p ^{\mathrm{nr}})\cap \Gamma $ in $X(\overline{\F }_p )$, and that all but finitely many points of $X(\overline{\Q }_p )\cap J(\Q _p ^{\mathrm{nr}})_{\divv }$ lie in $X(\Q _p ^{\mathrm{nr}})$ (in fact, he proves a considerably more general statement \cite[Proposition 8.2.1]{Ray1}). One possible novelty of our approach is that we make minimal use of the Galois action on torsion points of $J$. We hope that this will make the results amenable to generalisation to other cases of the Zilber--Pink conjecture.

After Faltings' proof of Mordell--Lang it was shown in several works how to bound the size of the set in terms of invariants of the curve. The state of the art is the work of Dimitrov--Gao--Habegger \cite{DGH} and K\"uhne \cite{kuhne2021equidistribution}, and separately Yuan \cite{yuan2021arithmetic}, who show that there is a constant $c(g)$ such that
\[
\# X(\overline{\Q }_p )\cap \Gamma <c(g)^{r+1}
\]
(with no condition on the reduction at $p$ or the field of definition of $X$). However the constant $c(g)$ seems hard to make explicit.

In the case $r=0$, part (3) of the theorem reduces to the Buium--Coleman's explicit Manin--Mumford bound \cite{buium:96} \cite{coleman:ramified}. In fact, the exponent $p^{3g}$ is a minor improvement on that of Buium which was obtained by Poonen \cite{poonen}. On the other hand, if we considered the intersection of $X(L)$ with $\Gamma $ for $L$ a finite unramified extension of $K$, we have Coleman's effective Chabauty bound \cite{coleman:chabauty}
\[
\# X(L)\cap \Gamma \leq \# X(\kappa _L )+2g-2,
\]
where $\kappa _L$ denotes the residue field of $L$. Part (2) of the theorem is a generalisation of Coleman's result \cite{coleman:ramified} that when $p\geq 2g$, torsion points on $X$ (i.e. the divisible hull of $\{ 0\}$) are unramified. The main idea of this paper (and the proof of part (3) of the theorem) is that one can combine these three ideas (Buium's bound on unramified torsion, Coleman's bound on ramified torsion, and Coleman's effective Chabauty bound) to get an explicit Mordell--Lang bound.

\subsection*{Acknowledgements}
We would like to thank Gabriel Dill for helpful feedback on an earlier version, and Arnab Saha for helpful conversations. This collaboration was initiated at ``Rational points on modular curves'' in ICTS Bangalore. We would like to thank the organisers for the excellent working conditions. This research was supported by Royal Society research fellowship URF$\backslash $R1$\backslash $201215 and grant RF$\backslash $ERE$\backslash $231161.
\section{Proof of part (1): the Buium step}
\begin{definition}[$p$-derivation] Let $u: A\rightarrow B$ be a ring homomorphism. A $p$-derivation of $u$ is a set-theoretic map $\delta:A \rightarrow B$ that satisfies  for 
all $x,y \in A$,

$(i)~ \delta (1) = 0$
\color{black}

$(ii)~ \delta (x+y) = \delta x + \delta y + C_p(u(x),u(y)) $

$(iii)~ \delta(xy) = u(x)^p \delta y + u(y)^p \delta x + p \delta x \delta y$\\
where
$$C_p(X,Y) =  \frac{X^p+Y^p -(X+Y)^p}{p}\in\mathbb{Z}[X,Y].$$ 
\end{definition}

\noindent Given a $p$-derivation, one can associate a unique lift of Frobenius (of $u$) $\phi: A\rightarrow B$ defined by $$\phi(x):=u(x)^{p}+p\delta(x).$$
Moreover, it is not difficult to see that there is a one to one correspondence between the $p$-derivations of $u$ and the ring homomorphisms $u_1:A\rightarrow W_{1}(B)$ such that $p_{1} \circ u_1=u$ given by $u_{1}(x)=(u(x),\delta(x))$, where $W_{1}(B)$ denote the $p$-typical  Witt vectors of length $2$ and $p_1: W_1(B)\rightarrow B$ be the first projection.\medskip

\noindent Let $R=\Z_{p}^{\mathrm{nr}}$, the ring of integers of $\Q _p ^{\mathrm{nr}}$. The unique lift of Frobenius $\sigma :R\rightarrow R$ gives a $p$-derivation $\delta$ of the identity map defined by $$\delta(x):=\frac{\sigma(x)-x^p}{p}.$$
Given a smooth scheme $Y/R$, let $Y_{1}=Y\times R/p^2$ be the reduction of $Y$ modulo $p^2$. Following section $1.4$ in \cite{buium:96}, one can define the first arithmetic jet space of $Y_{1}$ denoted as $Y^1_{0}$ (also called the first $p$-jet space).
By Bertapelle--Previato--Saha \cite{BPS}, one can reinterpret Buium's arithmetic jet spaces using the functor of points as follows:
\begin{lemma}[Functor of points of Jet spaces] Let $Y/R$ be a  scheme. The first arithmetic jet space represents the functor from the category of $R$-Algebras to Sets given by 
$$Y^1(B)=Y(W_1(B)) ~\mathrm{for ~any} ~R\mathrm{-algebra}~ B.$$
\end{lemma}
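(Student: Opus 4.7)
The plan is to reduce to the affine case and there compare universal properties directly. Both functors $B \mapsto Y^1(B)$ and $B \mapsto Y(W_1(B))$ are Zariski sheaves on $R$-algebras: the former because it is representable, the latter because for any $R$-algebra $B$ the map $\mathrm{Spec}\,W_1(B) \to \mathrm{Spec}\,B$ is a surjective nilpotent thickening, so the points of a Zariski open of $Y$ valued in $W_1(B)$ are determined by their image in $Y(B)$. Hence, picking a Zariski cover of $Y$ by open affines and reducing, I may assume $Y = \mathrm{Spec}\,A$ with $A = R[x_1,\ldots,x_n]/(f_1,\ldots,f_m)$.

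In the affine case I would invoke Buium's explicit construction of the first $p$-jet space. Writing $x'_i$ for formal symbols to represent $\delta x_i$ and using the axioms (i)--(iii) of a $p$-derivation together with the fixed $p$-derivation $\delta(r) = (\sigma(r)-r^p)/p$ on $R$, one computes $\delta f_j \in R[x_1,\ldots,x_n,x'_1,\ldots,x'_n]$ for each defining equation $f_j$ of $A$. Buium's $(Y_1)^1_0$ is then $\mathrm{Spec}\,A^1$, where
\[
A^1 = R[x_1,\ldots,x_n,x'_1,\ldots,x'_n]/(f_1,\ldots,f_m,\delta f_1,\ldots,\delta f_m),
\]
and this ring is characterised by the universal property that an $R$-algebra homomorphism $A^1 \to B$ corresponds to the data of an $R$-algebra homomorphism $u:A\to B$ together with a $p$-derivation $\delta:A\to B$ of $u$ extending the chosen $p$-derivation on $R$.

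Now I would invoke the bijection already recalled in the preamble: pairs $(u,\delta)$ consisting of a ring homomorphism $u:A\to B$ and a $p$-derivation $\delta$ of $u$ are in natural bijection with ring homomorphisms $u_1:A\to W_1(B)$ satisfying $p_1\circ u_1 = u$, via $u_1(x)=(u(x),\delta(x))$. Transporting the $R$-algebra compatibility across this bijection amounts to equipping $W_1(B)$ with the $R$-algebra structure $r\mapsto (r,\delta(r))$, which is exactly the standard one. Hence $R$-algebra maps $A^1 \to B$ are the same as $R$-algebra maps $A \to W_1(B)$, i.e.\ $Y^1(B) = Y(W_1(B))$, naturally in $B$.

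The main thing to be careful about, and really the only subtlety, is keeping the two roles of the $p$-derivation on $R$ in sync: it appears implicitly in Buium's formal computation of $\delta f_j$ (because the coefficients of $f_j$ lie in $R$), and it also appears in the definition of the $R$-algebra structure on $W_1(B)$. Because $R=\Z_p^{\nr}$ carries a canonical lift $\sigma$ of Frobenius, both uses are fixed by the same choice, so the bijection is canonical and the identification is functorial in $B$. Gluing these local identifications back over the chosen affine cover of $Y$, which is routine since both sides are sheaves, completes the proof.
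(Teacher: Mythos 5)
The paper does not actually prove this lemma; it is quoted from Bertapelle--Previato--Saha \cite{BPS}, so there is no argument in the text to compare against. Your outline --- reduce to the affine case, write down Buium's explicit jet ring $A^1=R[x_i,x_i']/(f_j,\delta f_j)$ and check its universal property, then invoke the bijection between $p$-derivations of $u:A\to B$ and ring maps $A\to W_1(B)$ lifting $u$ --- is the natural approach and the affine part of it is correct.

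There is, however, a genuine error in the step that reduces to the affine case. You assert that $\mathrm{Spec}\,W_1(B)\to\mathrm{Spec}\,B$ (really the closed immersion $\mathrm{Spec}\,B\hookrightarrow\mathrm{Spec}\,W_1(B)$ induced by $p_1$) is a nilpotent thickening for every $R$-algebra $B$, and deduce the sheaf property of $B\mapsto Y(W_1(B))$ from this. That premise is false whenever $p$ is not nilpotent in $B$: the kernel of $p_1$ is $VB=\{(0,b)\}$, and $(0,b)^n=(0,p^{n-1}b^n)$, so $(0,1)$ is not nilpotent already for $B=R=\Z_p^{\mathrm{nr}}$; the two spectra then do not share an underlying space. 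The conclusion you want is nevertheless true, but for a different reason: $W_1$ is compatible with localization, $W_1(B_f)\cong W_1(B)_{[f]}$ where $[f]=(f,0)$ is the Teichm\"uller lift, and if $(f_1,\dots,f_k)=B$ then $([f_1],\dots,[f_k])=W_1(B)$, so the Teichm\"uller lifts of a Zariski cover of $\mathrm{Spec}\,B$ form a Zariski cover of $\mathrm{Spec}\,W_1(B)$; since $W_1$ also preserves the relevant equalizer (it preserves limits of rings, being $B\times B$ on underlying sets) and $Y$ is a Zariski sheaf, $B\mapsto Y(W_1(B))$ is a Zariski sheaf. To finish the gluing one should also observe that for a distinguished open $D(f)\subset\mathrm{Spec}\,A$ the induced map $D(f)^1\to(\mathrm{Spec}\,A)^1$ is the open immersion $D\bigl(f\cdot(f^p+p\,\delta f)\bigr)\hookrightarrow\mathrm{Spec}\,A^1$, which is not quite ``routine'' but follows from the description of units in $W_1(B)$.
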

\noindent Hence $Y^1$ is an $R$-scheme  with the projection map $u:Y^1\longrightarrow Y$. Then $Y^1_{0}$ in Section $1.4$ in \cite{buium:96} is indeed the reduction of $Y^1$ modulo $p$. Moreover, if $Y/R$ is a group scheme then $Y^1_{0}$ is a $k$-group scheme.
Since $R$ is equipped with a $p$-derivation, it corresponds to an $R$-algebra map $R\rightarrow W_{1}(R)$. Thus we have a functorial map $Y(R)\xrightarrow{\nabla^1} Y(W_1(R)).$ The functor of points of jet spaces then gives a map $$\nab: Y(R)\rightarrow Y_{0}^{1}(k)$$ by the following compositions $\nab: Y(R)\xrightarrow{\nabla^1} Y^1(R)\xrightarrow{\mod p} Y_{0}^{1}(k)$ that satisfies

\begin{align}\label{mod-p-com}
\xymatrix{
Y(R)\ar[rd]_{\mod p} \ar[rr]^{\nab}& &  Y^1_{0}(k)\ar[ld]^{\overline{u}}\\
& Y_{0}(k)          }
\end{align}
For the explicit bound in Theorem \ref{thm:main1} we need an explicit bound on $\Gamma /p\Gamma $. Finiteness of $\Gamma /p\Gamma $ is proved in \cite[Lemma 9.1.2]{Ray1} and it is straightforward to make this quantitative.
\begin{lemma} \label{bd-Gamma-tor} Let $\Gamma$ be a subgroup of $(\mathbb{Q}/\mathbb{Z})^n$ for some positive integer $n$. Let $\Gamma[p]$ be the $p$-torsion subgroup of $\Gamma$. Then $$ \#(\Gamma/p\Gamma) \leq  \#(\Gamma [p])\leq p^n.$$
\end{lemma}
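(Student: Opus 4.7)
The plan is to prove the two inequalities $\#(\Gamma/p\Gamma) \leq \#\Gamma[p]$ and $\#\Gamma[p] \leq p^n$ (I read the ``$n$'' on the right-hand side of the stated inequality as $p^n$, since $\Gamma[p]$ is naturally a subgroup of $\mathbb{F}_p^n$ of $\mathbb{F}_p$-dimension at most $n$, so if one interprets $\#$ as $\mathbb{F}_p$-dimension the bound is literally $n$).

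The second inequality is immediate: the $p$-torsion of $\mathbb{Q}/\mathbb{Z}$ is $\tfrac{1}{p}\mathbb{Z}/\mathbb{Z} \cong \mathbb{F}_p$, so $\Gamma[p] \hookrightarrow ((\mathbb{Q}/\mathbb{Z})^n)[p] = \mathbb{F}_p^n$, giving $\dim_{\mathbb{F}_p}\Gamma[p]\leq n$ and hence $\#\Gamma[p]\leq p^n$.

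For the first inequality I would argue by contradiction, reducing to the familiar case of finite abelian groups. Set $d := \dim_{\mathbb{F}_p}\Gamma[p]$ and suppose $\dim_{\mathbb{F}_p}\Gamma/p\Gamma \geq d+1$. Pick $x_1,\dots,x_{d+1} \in \Gamma$ that are linearly independent modulo $p\Gamma$, and let $H = \langle x_1,\dots,x_{d+1}\rangle\subset\Gamma$. Since $\Gamma$ is a torsion group (it sits inside $(\mathbb{Q}/\mathbb{Z})^n$), $H$ is a finitely generated torsion abelian group, hence finite. The $x_i$ remain linearly independent modulo $pH$, because any relation $\sum a_i x_i \in pH\subset p\Gamma$ forces $a_i\equiv 0\pmod{p}$ by independence in $\Gamma/p\Gamma$; so $\dim_{\mathbb{F}_p}H/pH \geq d+1$. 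Since $H$ is a finite abelian group, the structure theorem gives $\dim_{\mathbb{F}_p}H[p] = \dim_{\mathbb{F}_p}H/pH \geq d+1$. But $H[p]\subset\Gamma[p]$ has $\mathbb{F}_p$-dimension at most $d$, a contradiction.

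The only nontrivial input is the standard identity $\#H[p] = \#(H/pH)$ for finite abelian groups $H$, which is immediate from the primary decomposition, so no substantive obstacle should appear.
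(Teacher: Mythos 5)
Your proof is correct, and you are also right that the ``$\leq n$'' in the statement must be read as $\leq p^n$ (or equivalently $\dim_{\F_p}\Gamma[p]\leq n$); the paper's own proof establishes exactly $\#\Gamma[p]=p^{a+b}$ with $a+b\leq n$, and the subsequent application in Lemma~\ref{bd-Gamma} uses $\dim_{\F_p}\Gamma_{\tor}[p]\leq g$, confirming the typo.

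Your route is genuinely different from the paper's. The paper splits $\Gamma=\Gamma'\oplus\Gamma_p$ into prime-to-$p$ and $p$-primary parts, then invokes the structure theorem for $p$-primary subgroups of $(\Q_p/\Z_p)^n$ to write $\Gamma_p\simeq(\Q_p/\Z_p)^a\oplus\bigl(\oplus_{i=1}^b\Z/p^{n_i}\Z\bigr)$ with $a+b\leq n$, from which $\#\Gamma[p]=p^{a+b}$ and $\#\Gamma/p\Gamma=p^b$ are read off directly. You instead argue by contradiction: pick lifts of $d+1$ independent classes in $\Gamma/p\Gamma$, observe that they generate a \emph{finite} subgroup $H$ (since $\Gamma$ is torsion and $H$ is finitely generated), check they stay independent in $H/pH$, and then appeal only to the equality $\#H[p]=\#(H/pH)$ for \emph{finite} abelian groups. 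Your argument is more elementary — it avoids the structure theory of infinite (Pr\"ufer-type) torsion groups and uses only the finite structure theorem — at the cost of being indirect; the paper's decomposition is slicker and gives the exact cardinalities rather than just the inequality. Both are sound.
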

\begin{proof} 
Let $\Gamma=\Gamma' \oplus \Gamma_p$, where $\Gamma'$ is the prime to $p$ torsion subgroup and $\Gamma_{p}$ is the $p$-power torsion subgroup of $\Gamma$. Then we have  $\Gamma/p\Gamma=\Gamma_{p}/p\Gamma_{p}$. 

We have 
\[
\Gamma _p \simeq (\Q _p /\Z _p )^a \oplus (\oplus _{i=1}^b \Z /p^{n_i }\Z )
\]
for some $n_i \geq 1$ and $a+b\leq n$. Then $\# \Gamma [p]=p^{a+b}$ and $\# \Gamma _p /p\Gamma _p =p^b$.
\end{proof}
We deduce the following lemma, which also follows immediately from the proof of \cite[Lemma 9.1.2]{Ray1}.
\begin{lemma} \label{bd-Gamma} Let $\Gamma \leq J(R)$ be a subgroup with $\dim \Gamma \otimes _{\Z }\Q =r$, and $\Gamma[p]$ be the $p$-torsion subgroup of $\Gamma$. Then $$ \#(\Gamma/p\Gamma) \leq p^r \cdot \#(\Gamma [p]) \leq p^{g+r}.$$
\end{lemma}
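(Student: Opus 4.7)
The plan is to split $\Gamma$ via the short exact sequence $0 \to \Gamma_{\tor} \to \Gamma \to F \to 0$, where $F := \Gamma/\Gamma_{\tor}$ is torsion-free of $\Q$-rank $r$. Since $F$ is torsion-free, $\mathrm{Tor}_1^{\Z}(F,\Z/p\Z) = F[p] = 0$, so tensoring with $\Z/p\Z$ yields a short exact sequence
\[
0 \to \Gamma_{\tor}/p\Gamma_{\tor} \to \Gamma/p\Gamma \to F/pF \to 0,
\]
whence $\#(\Gamma/p\Gamma) = \#(\Gamma_{\tor}/p\Gamma_{\tor}) \cdot \#(F/pF)$; the two outer terms will then be bounded separately.

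For the torsion-free factor, since $F$ has $\Q$-rank $r$, any $r+1$ elements $f_0,\ldots,f_r \in F$ satisfy a nontrivial $\Z$-linear relation $\sum a_i f_i = 0$ which we may normalize to have $\gcd(a_i)=1$; reducing modulo $p$ forces at least one coefficient to be nonzero in $\F_p$, so the images $\bar f_i$ are $\F_p$-dependent in $F/pF$. Hence $\dim_{\F_p}(F/pF)\leq r$ and $\#(F/pF)\leq p^r$. For the torsion factor, my plan is to embed $\Gamma_{\tor} \hookrightarrow J(R)_{\tor} \hookrightarrow J(\C)_{\tor} \cong (\Q/\Z)^{2g}$ via some embedding $R\hookrightarrow \C$, and invoke Lemma \ref{bd-Gamma-tor} to obtain $\#(\Gamma_{\tor}/p\Gamma_{\tor}) \leq \#\Gamma_{\tor}[p] = \#\Gamma[p]$. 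Combined, these produce the first claimed inequality $\#(\Gamma/p\Gamma) \leq p^r \cdot \#\Gamma[p]$.

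The main obstacle is the second inequality $\#\Gamma[p] \leq p^g$, since Lemma \ref{bd-Gamma-tor} applied to $(\Q/\Z)^{2g}$ would yield only $p^{2g}$; sharpening this requires genuine input from the theory of abelian schemes. My plan is to observe $\Gamma[p] \hookrightarrow J(R)[p]$ and to bound $\#J(R)[p]$ using good reduction: $J[p]$ is a finite flat $R$-group scheme of order $p^{2g}$, and its special fibre $J[p]_k$ (with $k=\overline{\F}_p$) sits in a connected-\'etale sequence $0 \to J[p]_k^0 \to J[p]_k \to J[p]_k^{\mathrm{et}} \to 0$ whose \'etale quotient has order $p^f$, where $f$ is the $p$-rank of the abelian variety $J_k$ and satisfies $f\leq g$. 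Since $R$ is henselian and $J[p]^{0}_k$ has no nontrivial $k$-points, $J[p](R) = J[p]^{\mathrm{et}}(k)$, so $\#J(R)[p] = p^f \leq p^g$. Putting everything together yields $\#(\Gamma/p\Gamma) \leq p^r \cdot p^g = p^{g+r}$, as required.
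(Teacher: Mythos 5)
Your proof is correct and follows the same overall decomposition as the paper: split off $\Gamma_{\tor}$, bound $\#(F/pF)$ by $p^r$ using the $\gcd$-normalization trick, and bound the torsion contribution via Lemma \ref{bd-Gamma-tor}. Where you genuinely add value is in making the torsion bound honest. The paper's proof simply asserts ``$\dim_{\F_p}\Gamma_{\tor}\leq g$ by Lemma \ref{bd-Gamma-tor},'' but that lemma applied to the naive embedding $J(R)_{\tor}\hookrightarrow(\Q/\Z)^{2g}$ would only give $p^{2g}$, not $p^g$. The missing ingredient, which you correctly identify and supply, is the good-reduction fact that $J(R)[p]=J[p]^{\mathrm{et}}_k(k)$ has order $p^{f}$ with $f$ the $p$-rank of $J_k$, hence $\leq p^g$; equivalently, the $p$-primary torsion of $J(R)$ embeds in $(\Q_p/\Z_p)^g$, so Lemma \ref{bd-Gamma-tor} applies with $n=g$ rather than $2g$. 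This is exactly the input the paper's argument silently relies on (the stated exponent $p^{g+r}$, and downstream $p^{3g+r}$, would otherwise be $p^{2g+r}$ and $p^{4g+r}$), so your connected--\'etale sequence argument is the right justification and a worthwhile clarification, not a detour. The only cosmetic remark: the displayed inequality $\#(\Gamma[p])\leq n$ in Lemma \ref{bd-Gamma-tor} is a typo for $\leq p^n$, as its own proof shows; your reading of it is the correct one.
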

\begin{proof} Let $\Gamma_{\tor}$ be the torsion subgroup of $\Gamma$. Then $\dim _{\F _p }\Gamma _{\tor }\leq g$ by Lemma \ref{bd-Gamma-tor}. Let $\Gamma ' =\Gamma /\Gamma _{\tor}$. Then it is enough to show that $\dim _{\F _p }\Gamma ' \otimes \F _p \leq \dim _{\Q }\Gamma ' \otimes \Q $. This follows from the fact that if $x_i \in \Gamma '$ are linearly dependent in $\Gamma ' \otimes \Q $, then they are linearly dependent in $\Gamma ' \otimes \F _p $.
\end{proof}
We recall the following results from Buium's work on the Manin--Mumford conjecture.
\begin{theorem}[Buium  \cite{buium:96}, Proposition $1.10$]\label{thm:buium1} Let $X/R$ be a smooth proper curve of genus $g>1$. Then $X^1_0$ is affine.
\end{theorem}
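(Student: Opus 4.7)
My plan is to realise $X^1_0 \to X_0$ as a non-trivial torsor under a line bundle of negative degree on $X_0$, from which affineness follows by classical ruled-surface geometry.

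First I would analyse the structure of the projection $u : X^1_0 \to X_0$. For any smooth $R$-scheme $Y$ of relative dimension $d$, the exact sequence
\[
0 \to B \to W_1(B) \to B \to 0
\]
(in which the kernel carries a Frobenius-twisted $B$-module structure) together with formal smoothness identifies the set of lifts of a $B$-point of $Y_0$ to $Y(W_1(B))$ with a torsor under $T_{Y_0} \otimes_{\mathcal{O}_{Y_0}, F} B$, where $F$ is the Frobenius on $B$. Globalising, $Y^1_0 \to Y_0$ is a Zariski-locally trivial torsor under $F^{*}T_{Y_0}$, for $F$ the absolute Frobenius of $Y_0$. Specialising to our curve, $u$ is a $\G_a$-torsor under the line bundle $L := F^{*}T_{X_0}$, which has degree $p(2-2g) < 0$.

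Next I would show that the torsor class $[X^1_0] \in H^1(X_0, L)$ is non-zero. A section of $u$ would, via its universal element, produce a morphism of $R/p^2$-schemes $W_1(X_0) \to X_1$ extending the closed immersion $X_0 \hookrightarrow X_1$, where $X_1 := X \times_R R/p^2$; since both source and target are flat lifts of $X_0$, the morphism must be an isomorphism, amounting to a distinguished global Frobenius lift on $X_1$ compatible with the $p$-derivation $\s$ on $R$. To rule this out I would compose with the Abel--Jacobi embedding $\iota: X \hookrightarrow J$ and the canonical Frobenius lift on the abelian scheme $J$ (which always exists, coming from the group structure and the lift of Frobenius on $R$), obtaining a non-trivial endomorphism of $J$ of degree $p$ stabilising $\iota(X) \subset J$ setwise — contradicting the rigidity of curves inside their Jacobians for $g > 1$.

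Finally, I would deduce affineness from non-triviality. The non-trivial $L$-torsor $X^1_0 \to X_0$ with $\deg L < 0$ compactifies to a $\mathbb{P}^1$-bundle $\overline{P} = \mathbb{P}(\mathcal{E}) \to X_0$, where $\mathcal{E}$ is the non-split extension $0 \to L \to \mathcal{E} \to \mathcal{O}_{X_0} \to 0$ classified by the torsor, and $X^1_0$ appears as the complement of the section $s_\infty$ induced by $L \subset \mathcal{E}$. A short intersection calculation gives $s_\infty^2 = -\deg L > 0$ and intersection number $1$ with a fibre of $\overline{P} \to X_0$, so $s_\infty$ is ample on $\overline{P}$, and its complement $X^1_0$ is therefore affine. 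The main obstacle is the second step, the non-triviality of the torsor: the rigidity heuristic above is correct but delicate for curves $X/R$ close to canonical lifts, and in Buium's original treatment this is handled uniformly via canonical $\delta$-characters on $J^1_0$ constructed from the Manin connection, which by direct computation restrict non-trivially to $\iota^1_0(X^1_0)$ and preclude a section of $u$.
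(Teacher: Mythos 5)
The paper gives no proof of this statement: it is quoted directly from Buium (Prop.~1.10 of \cite{buium:96}), so what you have written is an attempted reconstruction rather than a comparison with an argument in the text. Your first step is correct and is indeed the starting point of Buium's treatment: $u:X^1_0\to X_0$ is a Zariski-locally trivial torsor under $F^*T_{X_0}\cong \Omega_{X_0}^{-p}$, a line bundle of degree $p(2-2g)<0$. The difficulty, as you yourself flag, is concentrated in the non-triviality of the torsor, and here your argument has a genuine error. You assert that the abelian scheme $J/R$ carries ``the canonical Frobenius lift\ldots which always exists, coming from the group structure and the lift of Frobenius on $R$.'' This is false: an abelian scheme over $R=W(\overline{\F}_p)$ admits a lift of Frobenius if and only if it is the Serre--Tate canonical lift of an ordinary abelian variety, which is a very restrictive condition. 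With that ingredient gone, the ``endomorphism of $J$ of degree $p$ stabilising $\iota(X)$'' never materialises, and the rigidity contradiction has nothing to bite on. Your closing paragraph concedes the point and gestures at Buium's actual mechanism ($\delta$-characters built from the arithmetic Manin map on $J^1_0$, restricted to $X^1_0$), but you do not carry it out; this is precisely the content of the theorem, not a refinement one can defer.

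There is a second, smaller gap in your final step. From $s_\infty^2=-\deg L>0$ and $s_\infty\cdot f=1$ you conclude that $s_\infty$ is ample and hence that its complement is affine. Nakai--Moishezon requires $s_\infty\cdot C'>0$ for \emph{every} irreducible curve $C'\subset\overline{P}$, not merely for fibres; indeed in the split case $\mathcal{E}=L\oplus\mathcal{O}$ the zero section is disjoint from $s_\infty$, so non-triviality of the extension is essential to the ampleness and must be invoked explicitly. Moreover, over $\overline{\F}_p$ one must rule out multisections disjoint from $s_\infty$, which amounts to showing that the torsor class does not die under pullback along finite covers $\tilde{D}\to X_0$; in characteristic $p$ this is not automatic (Frobenius pullback can annihilate $H^1$ classes) and is exactly the kind of issue the $\delta$-character argument is designed to bypass. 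So even granting non-triviality of the class in $H^1(X_0,F^*T_{X_0})$, the passage to affineness needs more care than the two intersection numbers you compute.
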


\begin{theorem}\label{Bui-thm} Let $X/R$ be a curve of genus $g>1$. Let $J$ be the Jacobian of $X$ and $\Gamma\leq J(R)$ be a subgroup such that
\[
\dim _{\Q _p }\Gamma \otimes _{\Z }\Q \leq r.
\]
Let $P$ be a point on $X(R)$ and $\iota_{P}: X\longrightarrow J$ be the Abel-Jacobi map with respect to $P$. Then 
$$\#\nab(\iota_{P}(X(R))\cap \Gamma)\leq p^{3g+r}3^{g}[p(2g-2)+6g]g!.$$
\end{theorem}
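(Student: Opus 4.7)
The plan is to follow the strategy of Buium \cite{buium:96} (refined by Poonen \cite{poonen}) for the Manin--Mumford case $r=0$, with the rank-$r$ extension driven by Lemma \ref{bd-Gamma}. By functoriality of $\nab$ one has the containment
\[
\nab(\iota_P(X(R))\cap\Gamma)\subseteq X^1_0(k)\cap\nab(\Gamma)\subseteq J^1_0(k),
\]
so it suffices to bound this finite intersection inside the $k$-group scheme $J^1_0$. The key geometric input is that $X^1_0$ is affine (Theorem \ref{thm:buium1}), which is what permits an effective intersection-theoretic estimate.

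The second step, following Buium, is to produce an ``arithmetic Manin map'' $\psi\colon J^1_0\to\G_a^g$: a morphism of $k$-group schemes built from the $p$-curvature of the Gauss--Manin connection on $H^1_{\dR}(X/R)$ together with the $p$-derivation on $R$. Because $\G_a^g(k)=k^g$ is of characteristic $p$, the composite group homomorphism $\psi\circ\nab\colon J(R)\to k^g$ kills $pJ(R)$ and hence factors through $J(R)/pJ(R)$. Restricting to $\Gamma$ gives
\[
\#\psi(\nab(\Gamma))\leq \#(\Gamma/p\Gamma)\leq p^{g+r}
\]
by Lemma \ref{bd-Gamma}. This is the only place the rank hypothesis enters.

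The third step is the Bezout estimate. Each fiber $\psi^{-1}(c)$ is a coset of $\Ker(\psi)\subset J^1_0$ of codimension $g$, so meets the affine curve $X^1_0$ in a finite set (once one knows that $\psi|_{X^1_0}$ is non-constant, which is part of the setup of Buium's character). In a suitable projective compactification of $J^1_0$ (obtained by embedding $J$ via a multiple of the theta divisor and pulling back along $J^1\to J$), one computes that $\deg(X^1_0)$ is bounded by $p(2g-2)+6g$ and that the cycle class of $\Ker(\psi)$ has degree bounded by $3^g\cdot g!$ — these are the degree computations of Buium and of Poonen. The resulting intersection bound is
\[
\#(X^1_0(k)\cap\psi^{-1}(c))\leq p^{2g}\cdot 3^g\cdot [p(2g-2)+6g]\cdot g!
\]
uniformly in $c$, where the extra $p^{2g}$ reflects the Frobenius twist inherent in the $p$-jet (the sharpening of Buium's original $p^{3g}$, cf.\ \cite{poonen}). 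Multiplying the two bounds yields $p^{3g+r}\cdot 3^g\cdot [p(2g-2)+6g]\cdot g!$, as claimed.

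The main obstacle, requiring genuine input from \cite{buium:96, poonen}, is the Bezout computation in the third step: the construction of a projective compactification of $J^1_0$ in which the cycle classes $[X^1_0]$ and $[\Ker(\psi)]$ can be paired, together with the explicit degree bounds above. The rank-$r$ generalization requires no new geometric input beyond Lemma \ref{bd-Gamma}; the Manin character absorbs the positive rank cleanly because its target is in characteristic $p$, so the analytic/geometric heart of the argument is identical to the Manin--Mumford case and only the combinatorial input $\#(\Gamma/p\Gamma)\leq p^{g+r}$ changes.
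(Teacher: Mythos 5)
Your proof takes essentially the same route as the paper's: bound $\#(\Gamma/p\Gamma)\leq p^{g+r}$ via Lemma~\ref{bd-Gamma}, observe that this bounds the image of $\nab(\Gamma)$ in the relevant finite quotient of $J^1_0(k)$, and multiply by Buium/Poonen's degree bound $p^{2g}3^g[p(2g-2)+6g]g!$ on the intersection of $X^1_0$ with each coset. The paper phrases the first step using cosets of the abelian subvariety $B=pJ^1_0$ rather than fibers of the Manin map $\psi$, but since $\Ker\psi$ contains $pJ^1_0$ with finite index this is the same argument. One small slip worth correcting: $X^1_0$ is an affine \emph{surface}, not a curve, so non-constancy of $\psi|_{X^1_0}$ alone does not give finite fibers; the cleaner justification (and the one the paper uses) is that $\psi^{-1}(c)\cap X^1_0$ is a closed subscheme both of the affine $X^1_0$ and of the proper coset $\psi^{-1}(c)$, hence is simultaneously affine and proper, hence finite.
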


\begin{proof} Let $B=p J^1_0$ be the maximal abelian subvariety of $J^1_0$. Note that $\nab(\Gamma)$ has finite image in $J_{0}^1(k)/B$. Indeed, the image set is bounded by $\#(\Gamma/p\Gamma)$. Then $\#(\Gamma/p\Gamma)\leq p^{g+r}$ by Lemma \ref{bd-Gamma}. Therefore we have $$\nab(X(R) \cap \Gamma )\subset \bigcup_{i=1}^{N} (P_{i}+B)\cap X^1_{0}$$
for some $P_{i}\in J^1_{0}(k)$ and $N\leq p^{g+r}$. Then by Theorem \ref{thm:buium1} we know that $X^1_{0}$ is affine, while $B$ is proper and they are closed subvarieties of $J^{1}_{0}$. Therefore $B\cap X^1_{0}$ is finite. Moreover, Buium proved \cite[p.357]{buium:96} that $$\#(B\cap X^1_{0})\leq p^{2g}3^{g}[p(2g-2)+6g]g!$$
This implies that $$\#\nab(X(R) \cap \Gamma)\leq p^{3g+r}3^{g}[p(2g-2)+6g]g!.$$
\end{proof}
As a corollary we deduce part (1) of Theorem \ref{thm:main1}.
\begin{corollary}\label{cor:cor1}
With the hypotheses as in Theorem \ref{Bui-thm}, and let $\red (\iota_{P}(X(R))\cap \Gamma )$ denote the mod $p$ reduction of the set $\iota_{P}(X(R))\cap \Gamma$. Then 
$$\# \red((\iota_{P}(X(R))\cap \Gamma)) \leq p^{3g+r}3^{g}[p(2g-2)+6g]g!.$$
\end{corollary}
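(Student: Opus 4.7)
The plan is to observe that this corollary is essentially immediate from Theorem \ref{Bui-thm} together with the commutative triangle (\ref{mod-p-com}). That triangle exhibits the mod $p$ reduction map $Y(R)\to Y_0(k)$ as the composition of $\nab:Y(R)\to Y_0^1(k)$ followed by the reduction $\overline u:Y_0^1(k)\to Y_0(k)$ of the structural projection $u:Y^1\to Y$. Since the construction of $\nab$ is functorial in the smooth $R$-scheme $Y$, this factorisation applies equally well to $Y=J$, which is the case relevant here.

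Applying it to the set $T:=\iota_P(X(R))\cap \Gamma\subset J(R)$, I would write
\[
\red(T) \;=\; \overline u(\nab(T)),
\]
so that $\#\red(T)\leq \#\nab(T)$, simply because a set-theoretic map cannot increase the size of an image. I would then invoke Theorem \ref{Bui-thm} to conclude
\[
\#\nab(T)\;\leq\; p^{3g+r}3^{g}[p(2g-2)+6g]g!,
\]
which is exactly the bound claimed.

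There is no genuine obstacle; the real content sits in Theorem \ref{Bui-thm}. The only conceptual remark is that $\nab$ records more information than $\red$ (namely, the Witt-vector lift given by the $p$-derivation on $R$), so any bound obtained at the level of arithmetic jets automatically descends to a bound on the naive mod $p$ reduction.
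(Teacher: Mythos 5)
Your proposal is correct and matches the paper's argument exactly: the paper likewise deduces the corollary from Theorem \ref{Bui-thm} together with the commutative diagram \eqref{mod-p-com}, using the factorisation $\red = \overline{u}\circ\nab$ so that $\#\red(T)\leq\#\nab(T)$. Nothing more needs to be said.
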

\begin{proof} Follows from Theorem \ref{Bui-thm} and the commutative diagram \eqref{mod-p-com}.
\end{proof}

\section{Proof of part (2): the Coleman step}
For $X/R$ and $\overline{z}$ in $X(\overline{\F }_p )$, we will denote by $]\overline{z}[$ the tube of $\overline{z}$ in the sense of Berthelot \cite{LS}. Recall that this is a rigid analytic space whose $\mathbb{C}_p $-points are the preimage of $\overline{z}$ under the reduction map. Fix $\overline{z}\in X(\overline{\F}_p )$ and $z$ in $]\overline{z}[(\overline{\Q }_p )$. In this section we prove the following proposition.
\begin{proposition}\label{prop:coleman1}
Let $z\in ]\overline{z}[(\overline{\Q }_p )$. If $\val (\int ^{z}_{z_0 }\omega ) \in \Z$ for all $z_0 \in X(\Q _p ^{\mathrm{nr}})$ and all $\omega \in H^0 (X_{\Q _p ^{\mathrm{nr}}},\Omega )$, then $z\in X(\Q _p ^{\mathrm{nr}})$.
\end{proposition}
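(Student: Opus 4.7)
The plan is to work locally on the residue disk of $z$. Pick $z_0 \in X(\Q_p^{\nr})$ with $\red(z_0) = \overline{z}$ (possible by smoothness of $X/R$ and strict henselianity of $R$), and a uniformizer $t \in \mathcal{O}_{X, z_0}$ so that $t$ identifies $]\overline{z}[$ with the open unit disk; set $\varpi := t(z) \in \overline{\Q}_p$, $\val(\varpi) > 0$. Choose a basis $\omega_1, \ldots, \omega_g$ of $H^0(X_R, \Omega^1)$ adapted to $z_0$, so $\omega_j = (c_j t^{n_j} + O(t^{n_j+1}))\,dt$ with $c_j \in R^\times$ and $0 = n_1 < n_2 < \cdots < n_g \leq 2g-2$ the orders of vanishing at $z_0$; here $n_1 = 0$ uses base-point freeness of the canonical system for $g \geq 2$. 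The implicit assumption $p \geq 2g$ in this section gives $n_j + 1 \leq 2g-1 < p$, so the leading coefficient $c_j/(n_j+1)$ of the Coleman primitive $F_j(t) := \int_0^t \omega_j$ is a unit in $R$.

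The main step is to upgrade the hypothesis from integer valuation to $F_j(\varpi) \in \Q_p^{\nr}$ for every $j$. By additivity of Coleman integration, $\int_{z_0'}^z \omega_j = F_j(\varpi) + c(z_0', \omega_j)$ with $c(z_0', \omega_j) \in \Q_p^{\nr}$ as $z_0'$ ranges over $X(\Q_p^{\nr})$, so the hypothesis reads $\val(F_j(\varpi) + c) \in \Z$ for every $c$ in a subset $C_j \subset \Q_p^{\nr}$. Suppose $\varpi \notin \Q_p^{\nr}$; let $L = \Q_p^{\nr}(\varpi)$ with uniformizer $\pi_L$ and ramification $e = [L:\Q_p^{\nr}] > 1$, and expand $F_1(\varpi) = \sum_{r=0}^{e-1} \alpha_r \pi_L^r$ in the $\Q_p^{\nr}$-basis $\{\pi_L^r\}$. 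Since $F_1(t) = c_1 t + O(t^2)$ with $c_1 \in R^\times$, the leading term $c_1 \varpi$ contributes a nontrivial $\pi_L^r$-component for every $r$ where $\varpi$ does, and these components dominate the contribution from the higher-order terms of $F_1$; hence some $\alpha_r$ with $r \geq 1$ is nonzero, producing a summand of the form $\val(\alpha_r) + r/e \notin \Z$ in the valuation of $F_1(\varpi) + c$. Choosing $c \in C_1$ close enough to $-\alpha_0$ makes this non-integer summand the overall minimum, contradicting integrality. I expect the main obstacle to be in showing that $C_1$ is dense enough in $\Q_p^{\nr}$ (modulo sufficiently high powers of $p$) for such an approximation of $-\alpha_0$ to exist; this rests on the structure of the Coleman logarithm on $J(\Q_p^{\nr})$ together with the flexibility of varying $z_0'$ across all residue disks.

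Once $F_j(\varpi) \in \Q_p^{\nr}$ for each $j$, a Galois/injectivity argument concludes. The rigid analytic map $F = (F_1, \ldots, F_g): ]\overline{z}[ \to \mathbb{A}^g_{\Q_p^{\nr}}$ factors as $\log \circ \iota$; since $\iota(]\overline{z}[)$ is contained in the formal neighborhood $J^0(\overline{\Q}_p)$ of $0 \in J$ where the formal log is an isomorphism onto $\Lie(J)$, $F$ is injective on $]\overline{z}[$. For any $\sigma \in \Gal(\overline{\Q}_p / \Q_p^{\nr})$, the identity $\sigma F_j(\varpi) = F_j(\sigma \varpi)$ combined with $F_j(\varpi) \in \Q_p^{\nr}$ yields $F(\sigma \varpi) = F(\varpi)$, and injectivity forces $\sigma \varpi = \varpi$. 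Hence $\varpi \in \Q_p^{\nr}$, so $z \in X(\Q_p^{\nr})$.
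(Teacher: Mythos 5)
Your overall scaffolding (choose $z_0 \in X(\Q_p^{\nr})$ reducing to $\overline{z}$, an integral parameter $t$, a basis of differentials adapted to $z_0$, then argue via the power series $F_j(t) = \int_{z_0}\omega_j$) matches the setup of the paper, and your closing step~5 --- Galois-equivariance of Coleman integration together with injectivity of $\log\circ\iota$ on the residue disk --- is a correct and rather slick way to finish \emph{once you know} $F_j(\varpi)\in\Q_p^{\nr}$ for all $j$. But the ``main step'' has a genuine gap that is not merely the density issue you flag.

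The assertion that the leading term $c_1\varpi$ ``dominates the contribution from the higher-order terms of $F_1$'' is false in general, and it is precisely the point at which all the work in the paper happens. The Coleman primitive $F_1(T)=\sum a_nT^n$ has $a_n=c_{n-1}/n$ with $c_{n-1}\in R$, so $\val(a_n)\geq -v_p(n)$, and these coefficients are \emph{not} $p$-integral: $a_p, a_{p^2},\dots$ typically have strictly negative valuation. For $\varpi$ with $\val(\varpi)\leq 1/(p-1)$, the term $a_{p^k}\varpi^{p^k}$ can have smaller valuation than $c_1\varpi$, so $F_1(\varpi)$ need not inherit any ramified $\pi_L^r$-component from $\varpi$ at all (the toy example $\int T^{p-1}\,dT=T^p/p$ evaluated at a $p$-th root already shows this). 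In other words the Newton polygon of $F_1$ has a nontrivial staircase of slopes to the left of the vertex at order~$1$, and the paper's proof is essentially a careful analysis of that staircase: the Coleman expansion (Lemma~6 of the paper) shows that, because $p\geq 2g$ and the orders $k_i\leq 2g-1$ are small relative to $p$, all these slopes have the highly constrained form $1/(p^{n_{i+1}}k_{i+1}-p^{n_i}k_i)$, and the contradiction is then reached by varying $\omega$ over a basis of $H^0(X,\Omega)$ and showing two independent differentials cannot produce the same pair $(n_i,k_i)$. Your argument skips this entirely. The second, acknowledged gap --- that the set $C_1=\{\int_{z_0'}^{z_0}\omega_1\}$ might not be dense enough in $\Q_p^{\nr}$ to approximate $-\alpha_0$ --- is also real; the image of $X(\Q_p^{\nr})$ under a single abelian integral is a proper subset of $\Q_p^{\nr}$ with no a priori density, and the paper gets around the need for any such approximation by never upgrading the hypothesis $\val\in\Z$ to membership in $\Q_p^{\nr}$: it reasons directly about which valuations $\val(F_\omega(\varpi))$ can take via Newton polygons and never needs to locate $-\alpha_0$.

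Concretely: the route you propose would work if the hypothesis of the proposition were ``$\int_{z_0}^z\omega\in\Q_p^{\nr}$'' rather than ``$\val(\int_{z_0}^z\omega)\in\Z$'' (and indeed the application to part~(2) produces the stronger statement), because then step~4 is vacuous and step~5 finishes. But as stated, proving the weaker hypothesis forces ramification requires the Coleman-expansion / Newton-polygon analysis, which your proof replaces with an incorrect ``leading-term dominates'' heuristic.
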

We first note that this implies part (2) of Theorem \ref{thm:main1}. Indeed, if $P\in X(\overline{\Q }_p )\cap J(\Q _p ^{\mathrm{nr}})_{\divv}$, say $n\cdot P\in J(\Q _p ^{\mathrm{nr}})$, and if $\omega _J$ denotes the differential in $H^0 (J_{\Q _p ^{\mathrm{nr}}},\Omega )$ corresponding to $\omega \in H^0 (X_{\Q _p ^{\mathrm{nr}}},\Omega )$, then 
\[
\int ^z _{z_0 }\omega =\frac{1}{n}\int ^{nP}\omega _J -\int ^{z_0 }_b \omega \in \Q _p ^{\mathrm{nr}}.
\]

The proof of Proposition \ref{prop:coleman1} will be a simple consequence of what we call the \textit{Coleman expansion} of an abelian integral. This is a remarkable method, devised by Coleman in \cite{coleman:ramified}, to decompose the Taylor expansion of an abelian integral into pieces of a rather algebraic nature. From this one obtains a description of the slopes of the Taylor expansion in terms of the action of powers of Verschiebung on the class of the differential in cohomology.
\subsection{The Coleman expansion of an abelian integral}
Let $U$ be an affine open neighbourhood of $x\in X(\overline{\F }_p )$ lifting to a formal affine $\mathfrak{U}\subset \mathfrak{X}$, where $\mathfrak{X}$ is the completion of $X$ along the special fibre. We will denote by $H^0 (\mathfrak{U},\Omega )$ the module of formal differentials on $U$. Following Coleman \cite{coleman:ramified}, we say $\eta \in H^0 (\mathfrak{U},\Omega )$ is of the second kind if there is an open cover of $\mathfrak{X}$ containing $\mathfrak{U}$ such that $(\mathfrak{U},\eta )$ extends to a Cech cocycle $(\mathfrak{U}_{\alpha },\omega _{\alpha })$ for the formal de Rham complex (i.e. so that $\omega _{\alpha }-\omega _{\beta }\in d\mathcal{O}(\mathfrak{U}_{\alpha }\cap \mathfrak{U}_{\beta })$. We have an isomorphism
\[
H^1 _{\dR}(X/R) \simeq \colim H(\mathcal{C}((\mathfrak{U}_{\alpha }))
\]
where the limit is over formal affine open covers $(\mathfrak{U}_{\alpha })$, and $H(\mathcal{C}((\mathfrak{U}_{\alpha })))$ is the group of Cech cocycles modulo exact cocycles. Given a differential of the second kind $\eta $, we will write $[\eta ]$ for the corresponding class in $H^1 _{\dR}(X/R)$. 

As before let $\sigma $ be the Frobenius lift $R\to R$, for any (formal) $R$-scheme $Y$ let $Y^\sigma $ denote pullback of $Y$ along $\sigma $. For a function $f$ on $Y$ we denote by $f^{\sigma }$ the pullback along the map $Y^\sigma \to Y$, and similarly for differentials. We have an $R$-linear Frobenius morphism
\[
X_{\overline{\F }_p }\to X_{\overline{\F }_p }^\sigma .
\]
In what follows, $\phi $ will denote a lift of Frobenius to a morphism
\[
\mathfrak{U}\to \mathfrak{U}^{\sigma },
\]
where $\sigma \in \Gal (\Q _p ^{\nr}|\Q _p )$ is the lift of Frobenius, and $\mathfrak{U}^{\sigma }$ is the pullback of $\mathfrak{U}$ along $\sigma $. $\phi $ determines a map 
\[
F:H^1 _{\dR}(X/R)\to H^1 _{\dR}(X/R)
\]
given, on differentials $\omega $, by $\omega \mapsto \phi ^* \omega ^{\sigma }$.
By Coleman (\cite[Corollary 9.1]{coleman:ramified}) given $\omega ,\eta \in H^0 (\mathfrak{U},\Omega )$ of the second kind such that $V[\omega ]=\eta $, we have 
\[
\omega =\frac{\phi ^* \eta ^{\sigma }}{p}+df
\]
where $f\in \mathcal{O}(\mathfrak{U})$. In particular $f$ is integral. Given $\omega $ we obtain a sequence $\omega _i$ of differentials and $f_i$ of functions on $\mathfrak{U}$ defined by $\omega _0 =\omega $ and 
\[
\omega _i =\frac{\phi ^* \omega ^{\sigma } _{i+1}}{p}+df_i .
\]

Given a nonzero cohomology class $\eta \in H^1 _{\dR}(X/R)$, we define its valuation $\val (\eta )$ to be the largest non-negative integer $n$ such that $\eta $ lies in $p^n H^1 _{\dR}(X/R)$. If $\eta /p^{\val (\eta )}$ lies in $H^0 (X,\Omega )+pH^1 _{\dR}(X/R)$, then we define 
\[
\overline{\eta }\in H^0 (X_{\overline{\F }_p },\Omega )
\]
to be a global differential with image in $H^1 _{\dR}(X_{\overline{\F }_p }/\overline{\F }_p )$ equal to that of the reduction modulo $p$ of $\eta /p^{\val (\eta )}$. 

The motivation for such a definition is the $F$-crystal structure on $H^1 _{\dR}(X/R)$: the $R$-module $H^1 _{\dR}(X/R)$ admits a $\sigma ^{-1}$-semilinear map
\[
V:H^1 _{\dR}(X/R)\to H^1 _{\dR}(X/R)
\]
satisfying
\begin{align*}
& VF=FV=p \\
& V(H^1 _{\dR}(X/R ))=H^0 (X,\Omega )+pH^1 _{\dR}(X/R).
\end{align*}
From this we deduce the following.
\begin{lemma}[Coleman, \cite{coleman:ramified}]\label{coleman:ramified}
For all $\eta \in H^1 _{\dR}(X/R)$, 
\[
\val (\eta )\leq \val (V(\eta ))\leq \val (\eta )+1.
\]
Furthermore $\val (V(\eta ))=\val (\eta )$ if and only if
 \[
 V(\eta )\in p^{\val (\eta )}\cdot H^0 (X,\Omega )+p^{\val (\eta )+1}\cdot H^1 _{\dR}(X/R).
 \]
\end{lemma}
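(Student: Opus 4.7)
The plan is to extract everything from the $F$-crystal relations $FV = VF = p$ together with $V(H^1_\dR) = H^0(X,\Omega) + pH^1_\dR$ and the $\sigma^{-1}$-semilinearity of $V$. First, I would put $\eta$ in a normal form by writing $\eta = p^n \eta_0$ where $n = \val(\eta)$ and $\eta_0 \notin pH^1_\dR(X/R)$. Since $\sigma$ fixes $p$, semilinearity gives $V(\eta) = p^n V(\eta_0)$, and since $V(\eta_0) \in H^1_\dR(X/R)$, we obtain the lower bound $\val(V(\eta)) \geq n$ for free.

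For the upper bound $\val(V(\eta)) \leq n+1$, the key move is to use $FV = p$. Suppose for contradiction that $V(\eta) \in p^{n+2} H^1_\dR(X/R)$. Applying $F$, which preserves $p$-adic valuation in the sense that $F(p^k H^1_\dR) \subseteq p^k H^1_\dR$, we get
\[
p \eta = FV(\eta) \in p^{n+2} H^1_\dR(X/R).
\]
Using the $p$-torsion freeness of $H^1_\dR(X/R)$ (which holds since $X/R$ is smooth and proper), we may cancel a factor of $p$ to conclude $\eta \in p^{n+1} H^1_\dR(X/R)$, contradicting $\val(\eta) = n$. So $\val(V(\eta)) \in \{n, n+1\}$.

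Finally, for the equivalence, I would use the surjectivity statement $V(H^1_\dR) = H^0(X,\Omega) + pH^1_\dR$ applied to $\eta_0$: write $V(\eta_0) = \omega + p\xi$ with $\omega \in H^0(X,\Omega)$ and $\xi \in H^1_\dR(X/R)$, so that $V(\eta) = p^n \omega + p^{n+1}\xi$. From the analysis in the first step, $\val(V(\eta)) = n$ is equivalent to $V(\eta_0) \notin pH^1_\dR$, which in turn is equivalent to $\omega \notin pH^1_\dR$. Translating this back in terms of $V(\eta)$ via multiplication by $p^n$ yields precisely the containment condition in the statement (with equality of valuations corresponding to the Hodge component of $V(\eta)/p^n$ being nonzero modulo $p$, and the valuation jumping by one exactly when that Hodge component vanishes modulo $p$, forcing $V(\eta) \in p^{n+1} H^1_\dR$).

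The argument is essentially formal once one accepts the $F$-crystal structure and the description of $V(H^1_\dR)$, so there is no real obstacle; the only non-trivial input is $p$-torsion-freeness of $H^1_\dR(X/R)$, which is a standard consequence of the smooth proper hypothesis on $X/R$ (e.g.\ via Hodge--de Rham degeneration). The argument is in fact exactly the kind of Mazur-style Hodge--Newton bookkeeping that controls how $V$ shifts valuations on an $F$-crystal whose Hodge filtration sits in degrees $0$ and $1$.
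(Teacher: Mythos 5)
The paper does not actually supply a proof of this lemma: it is attributed to Coleman, and the preceding text simply lists the $F$-crystal relations and says ``from this we deduce the following''. So your write-up is filling a real gap rather than being compared to a proof in the paper, and the derivation you give from those relations is the natural one.

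Your lower bound uses $\sigma^{-1}$-semilinearity and the fact that $\sigma$ fixes $p$; your upper bound uses $FV = p$, $\sigma$-semilinearity of $F$, and $p$-torsion-freeness of $H^1_\dR(X/R)$, which does hold for a smooth proper curve over $R$. Both arguments are correct and complete.

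There is, however, one place where the write-up papers over a genuine imprecision, and it is worth being explicit about it. You claim that translating ``$\omega \notin pH^1_\dR$'' by $p^n$ ``yields precisely the containment condition in the statement''. It does not. Your own first sentence of the third paragraph shows that for \emph{every} $\eta$ one has $V(\eta) = p^n \omega + p^{n+1}\xi \in p^{\val(\eta)}H^0(X,\Omega) + p^{\val(\eta)+1}H^1_\dR(X/R)$: the containment written in the lemma is automatic from $V(H^1_\dR) = H^0 + pH^1_\dR$ and semilinearity, regardless of whether the valuation jumps. In particular the ``if'' direction of the stated biconditional fails: $V(\eta)$ can lie in that subset while $\val(V(\eta)) = \val(\eta)+1$ (take the $H^0$ summand to be $0$). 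Your parenthetical remark in fact identifies the correct replacement: $\val(V(\eta)) = \val(\eta)$ iff the $H^0$-component of $V(\eta)/p^{\val(\eta)}$ is nonzero modulo $p$, equivalently $V(\eta) \notin p^{\val(\eta)+1}H^1_\dR(X/R)$, which is what is actually needed later to define $\overline{\nu_i}$ and to guarantee it is a nonzero differential mod $p$. So the proof is fine, but you should state explicitly that the biconditional as written is degenerate and that the operative condition is the nonvanishing of the Hodge component mod $p$, rather than asserting that your analysis reproduces the containment ``precisely''.
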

We now recall the definition of the Coleman expansion. Let $\omega \in H^0 (X,\Omega )$ be a differential which is nonzero modulo $p$, and $\overline{z}\in X(\overline{\F }_p )$. Define a sequence $n_i =n_i (\omega )$ and $k_i =k_i (\omega ,\overline{z})$ of non-negative integers for $i\geq 0$. $n_0=0$ and $k_0 $ is the order of $\omega $ at $\overline{z}$. For $i>0$, $n_i$ is defined to be the $i$th positive integer $m$ such that $\val (V^m ([\omega ] ))=\val (V^{m-1}([\omega ] ))$. Define $k_i (\omega ,\overline{z}):=\ord _{\overline{z}}(\overline{V^m (\omega  )})+1$. Coleman proved \cite[Lemma 6]{coleman:ramified} that the sequences $(n_i)$ and $(k_i )$ are infinite.

\begin{lemma}
We have $\val ([\omega _i ])=p^{i-j}$, where $j$ is the largest non-negative integer such that $n_j \leq i$.
\end{lemma}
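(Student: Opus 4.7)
The plan is to translate the recursion for $\omega_i$ into a clean recursion for the de Rham classes $[\omega_i]$ under the Verschiebung, and then telescope using Lemma~\ref{coleman:ramified}. First I would rearrange the defining relation $\omega_i = \phi^* \omega_{i+1}^{\sigma}/p + df_i$ as $p\omega_i - \phi^*\omega_{i+1}^{\sigma} = p\,df_i$, which, after passing to cohomology, says precisely
\[
F[\omega_{i+1}] = p[\omega_i]
\]
where $F$ is the Frobenius action on $H^1_{\dR}(X/R)$ introduced above. Applying $V$ to both sides and using $VF = p$ gives $p[\omega_{i+1}] = pV[\omega_i]$. Cancelling $p$ (which is legal since $H^1_{\dR}(X/R)$ is a free $R$-module for $X/R$ smooth proper of good reduction, hence $p$-torsion free) yields $[\omega_{i+1}] = V[\omega_i]$, so inductively $[\omega_i] = V^i[\omega]$.

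With this identification the valuation is computed by telescoping. Because $\omega$ is nonzero modulo $p$, we have $\val([\omega]) = 0$. By Lemma~\ref{coleman:ramified}, each step $V^{m-1}[\omega] \mapsto V^m[\omega]$ raises the valuation by either $0$ or $1$, and by the very definition of the sequence $(n_k)$ the increment at step $m$ is $0$ precisely when $m \in \{n_1, n_2, \dots\}$ and is $1$ otherwise. Summing from $m = 1$ to $m = i$:
\[
\val([\omega_i]) = \sum_{m=1}^{i}\bigl(\val(V^m[\omega]) - \val(V^{m-1}[\omega])\bigr) = i - \#\{k \geq 1 : n_k \leq i\} = i - j,
\]
which gives the claim.

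The only step requiring any care is the cancellation of $p$ in the identity $p[\omega_{i+1}] = pV[\omega_i]$, since otherwise one would only have the equality modulo $p$-torsion; this is where the hypothesis that $X/R$ is smooth proper with good reduction is genuinely used, via the degeneration of the Hodge--de Rham spectral sequence. Beyond that, the proof is pure bookkeeping with Lemma~\ref{coleman:ramified} and the definition of the jump indices $n_k$, so I do not anticipate any real obstacle.
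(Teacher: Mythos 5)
Your proof is correct and follows essentially the same approach as the paper: telescoping the valuation increments using Lemma~\ref{coleman:ramified}, with the additional (and welcome) step of making explicit that $[\omega_i]=V^i[\omega]$ from the recursion, which the paper takes as built into the construction. Note that the stated equality $\val([\omega_i])=p^{i-j}$ is evidently a typo for $\val([\omega_i])=i-j$, which is what both you and the paper's own proof compute.
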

\begin{proof}
By definition, $\val ([\omega _{\ell }])=\val ([\omega _{\ell -1}])$ if $\ell $ is in the sequence $(n_j)$. By Lemma \ref{coleman:ramified}, $\val ([\omega _\ell ])$ is $\val ([\omega _{\ell-1}])+1$ if $\ell$ is not in the sequence $(n_{j})$. Hence 
\[
\val (\omega _i ) =\# \{ 1,\ldots ,i \} - \# \{ n_1 ,\ldots ,n_j \}. 
\]
\end{proof}
Define $\nu _i :=\omega _{n_i }/p^{n_i -i}$. Note that, by Lemma \ref{coleman:ramified}, $\nu _i $ lies in $H^0 (X,\Omega )$ modulo $p$. 

Finally, suppose $T$ is an integral parameter at $z_0 $, and $\phi$ is a lift of Frobenius sending $T^{\sigma }$ to $T^p$ (such a lift exists, see e.g. \cite{coleman:torsion}). 
\begin{lemma}[Coleman, \cite{coleman:ramified}, p.624]
There are functions $g_i (T)\in T\cdot R[\! [T]\! ]$ such that 
\[
\nu _i =\frac{\phi ^{(n_{i+1}-n_i )*}\nu _{i+1}^{\sigma ^{n_{i+1}-n_i}}}{p}+dg_i .
\]
\end{lemma}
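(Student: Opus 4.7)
The approach is to iterate the one-step recursion $\omega_\ell = \phi^*\omega_{\ell+1}^\sigma/p + df_\ell$ exactly $m := n_{i+1} - n_i$ times, starting at $\ell = n_i$. A straightforward induction yields
\[
\omega_{n_i} = \frac{\phi^{(m)*}\omega_{n_{i+1}}^{\sigma^m}}{p^m} + d\!\left(\sum_{k=0}^{m-1}\frac{\phi^{(k)*}f_{n_i+k}^{\sigma^k}}{p^k}\right),
\]
where $\phi^{(k)} := \phi^{\sigma^{k-1}}\circ\cdots\circ\phi$ is the $k$-fold iterate of $\phi$, locally sending $T$ to $T^{p^k}$. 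Substituting $\omega_{n_j} = p^{n_j - j}\nu_j$ and dividing through by $p^{n_i - i}$ (using $n_{i+1} - (i+1) = n_i - i + m - 1$) produces
\[
\nu_i = \frac{\phi^{(m)*}\nu_{i+1}^{\sigma^m}}{p} + dg_i, \qquad g_i := \sum_{k=0}^{m-1}\frac{\phi^{(k)*}f_{n_i+k}^{\sigma^k}}{p^{k + n_i - i}} - C,
\]
with the constant $C$ chosen so that $g_i(0) = 0$.

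The main obstacle is integrality: proving that $g_i$ lies in $R[\![T]\!]$. For this it suffices to show that $f_{n_i + k}$ is divisible by $p^{n_i - i + k}$ as a formal function on $\mathfrak U$ for every $0 \le k < m$. One arranges this by choosing the differentials $\omega_j$ representing the classes $V^j[\omega]$ so that, on $\mathfrak U$, each $\omega_j$ is divisible by $p^{v_j}$ as a formal differential, where $v_j := j - \ell$ (for $n_\ell \le j < n_{\ell + 1}$) is the cohomological valuation supplied by the preceding lemma. Such representatives exist because the class $V^j[\omega]$ lies in $p^{v_j} H^1_{\dR}(X/R)$, so admits a Cech cocycle representative whose $\mathfrak U$-component is divisible by $p^{v_j}$.

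Once this choice is made, the fact that $\phi^*$ multiplies $dT$ by $pT^{p-1}$ makes $\phi^*\omega_{j+1}^\sigma/p$ divisible by $p^{v_{j+1}} \ge p^{v_j}$, so the defining relation $df_j = \omega_j - \phi^*\omega_{j+1}^\sigma/p$ forces $df_j$ to be divisible by $p^{v_j}$; shifting $f_j$ by the appropriate additive constant gives $f_j = p^{v_j}\tilde f_j$ with $\tilde f_j$ integral. Since $v_{n_i + k} = n_i - i + k$, each summand in $g_i$ collapses to $\phi^{(k)*}\tilde f_{n_i + k}^{\sigma^k}$, hence lies in $R[\![T]\!]$, and after the constant adjustment one obtains $g_i \in T\cdot R[\![T]\!]$. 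The delicate technical point—worked out in Coleman's original argument—is to verify that the $\omega_j$ can be chosen compatibly with the recursion so that each $f_j$ retains its integrality as the induction proceeds.
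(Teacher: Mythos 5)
Your overall strategy — iterate the one-step relation $\omega_\ell = \phi^*\omega_{\ell+1}^\sigma/p + df_\ell$ exactly $m = n_{i+1}-n_i$ times and then rescale — is a genuinely different route from the paper's, which instead works directly in cohomology: it computes $F^{m}[\nu_{i+1}] = p[\nu_i]$ in $H^1_{\dR}(X/R)$ in one step, deduces the existence of a rigid analytic primitive $g_i$, and invokes Coleman's Corollary 9.1 at the level of $\nu_i$ and $\nu_{i+1}$ to get integrality, without ever tracking the intermediate $f_j$'s. Your iterated formula and the bookkeeping $n_{i+1}-(i+1) = n_i - i + m - 1$ are both correct, so up to the integrality step the two arguments agree on what must be shown.

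However, the integrality step contains a genuine gap. You assert that once $\omega_j$ and $\phi^*\omega_{j+1}^\sigma/p$ are both divisible by $p^{v_j}$, the relation $df_j = \omega_j - \phi^*\omega_{j+1}^\sigma/p$ forces $df_j$ to be divisible by $p^{v_j}$, and hence, after subtracting a constant, $f_j = p^{v_j}\tilde f_j$ with $\tilde f_j$ integral. The first implication is fine, but the second is false: divisibility of $df$ by $p^k$ in $R[\![T]\!]\,dT$ does \emph{not} imply divisibility of $f - f(0)$ by $p^k$. The simplest counterexample is $f = T^{p^k}$, for which $df = p^k T^{p^k-1}\,dT$ is divisible by $p^k$ while $f$ is a unit multiple of $T^{p^k}$. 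This is not a peripheral technicality — it is exactly the $p$-th-power (Cartier) phenomenon that makes the Coleman expansion nontrivial, and it is precisely where the denominators $p^{-i}$ in the expansion $\int^z_b\omega = \int^{z_0}_b\omega + \sum_{i>0} g_i(z^{p^{n_i}})/p^i$ come from. The individual summands $\phi^{(k)*} f_{n_i+k}^{\sigma^k}/p^{k+n_i-i}$ in your candidate $g_i$ need not be separately integral, so the claim that each one is collapses. Your final sentence correctly flags that compatibility of the choices of $\omega_j$ with the recursion is delicate, but the specific justification you supply for the divisibility of $f_j$ is the part that fails, and the paper's proof deliberately sidesteps it by working with $\nu_i$ and $\nu_{i+1}$ directly in cohomology and handing the integrality statement to Coleman in a form that does not require $f_j$ itself to be $p$-divisible.
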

\begin{proof}
For the sake of completeness we recall some details of the proof. We have
\begin{align*}
[\nu _{i+1}] & =\frac{[\omega _{n_{i+1}}]}{p^{n_{i+1}-i-1}} \\
 & =  \frac{V^{n_{i+1}-n_i}([\omega _{n_i} ])}{p^{n_{i+1}-i-1}} \\
\end{align*}
hence
\begin{align*}
F^{n_{i+1}-n_i }([\nu _{i+1}]) & = \frac{p^{n_{i+1}-n_i}([\omega _{n_i} ])}{p^{n_{i+1}-i-1}}  \\
& =\frac{[\omega _{n_i} ]}{p^{n_i -i-1}}=p[\nu _i ].
\end{align*}
in $H^1 _{\dR}(X/R)$, hence there is a rigid analytic function $g_i$ on $]U[$ such that
\[
\nu _i =\frac{\phi ^{(n_{i+1}-n_i )*}\nu _{i+1}^{\sigma ^{n_{i+1}-n_i}}}{p}+dg_i .
\]
Coleman shows \cite[Corollary 9.1]{coleman:ramified} that in fact $g_i $ can be taken to be integral, i.e. in $\mathcal{O}(\mathfrak{U})$. Taking the formal expansion at $z_0$ gives the power series in the statement of the Lemma.
\end{proof}
We define the \textit{Coleman expansion} of $\int ^z _b \omega $ to be the series expansion
\[
\int ^z _b \omega =\int ^{z_0 }_b \omega +\sum _{i>0}\frac{g_i (z^{p^{n_i }})}{p^i }.
\]
Note that $g_i (z^{p^{n_i }})\in T^{p^{n_i }}\cdot R[\! [T^{p^{n_i }}]\! ]$.

Recall that the \textit{Newton polygon} of a power series $F(T)=\sum _{n\geq 0}a_n T^n \in \Q _p ^{\mathrm{nr}}[\! [T]\! ]$ is the highest convex polytope in $\mathbb{R}^2$ lying below all the points $(n,\val (a_n ))$. We say $\lambda >0$ is a negative slope of $F$ if there is a line segment of the Newton polygon of $F$ of gradient $-\lambda $. We recall the following properties. 
\begin{lemma}\label{lemma:slope_pictures}
\begin{enumerate}
\item There is a point $z$ with $\val (z)=\lambda $ and $F(z)=0$ if and only if $\lambda $ is a negative slope of $F$. 
\item If $\val (z)=\lambda $, and $\lambda $ is not a negative slope of $F(T)=\sum a_n T^n$, and there are consecutive slopes $\mu _1 $ and $\mu _2$, joined at a vertex $(n,m)$, with
\[
-\mu _1 <\lambda <-\mu _2 ,
\]
then $\val (F(z))=\lambda n+m$.
\end{enumerate}
\end{lemma}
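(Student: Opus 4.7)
The plan is to derive both parts from a term-by-term analysis of $F(z) = \sum_{n \geq 0} a_n z^n$ via the strong triangle inequality. The starting point is that $\val(a_n z^n) = \val(a_n) + n\lambda$, and that, by definition of the Newton polygon, each point $(n, \val(a_n))$ lies on or above the polygon, with equality at the vertices.

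I would prove part (2) first, as it is the technical core. At the vertex $(n, m)$ one has $\val(a_n) = m$, so $\val(a_n z^n) = m + n\lambda$. The hypothesis that $\lambda$ lies strictly between the two adjacent slopes (in the paper's sign convention) is precisely what is needed to guarantee that the affine functional $(k, v) \mapsto v + k\lambda$ achieves a strict minimum on the polygon at the vertex $(n, m)$: for $k > n$ the polygon past $(n, m)$ has slope forcing $\val(a_k) + k\lambda > m + n\lambda$, and symmetrically for $k < n$ one uses the slope on the left side. The strong triangle inequality then yields $\val(F(z)) = m + n\lambda$. The main bookkeeping step is simply verifying convexity and sign conventions so that the inequalities go the right way.

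Part (1) then splits naturally. The ``only if'' direction is the contrapositive of part (2): if $\lambda > 0$ is not a negative slope of $F$, then $\lambda$ is either strictly between two consecutive slopes or lies beyond the range of slopes, and in either case part (2) (or its boundary analogue using only one adjacent slope) gives $\val(F(z))$ finite, contradicting $F(z) = 0$. The ``if'' direction reduces to the analogous statement for polynomials via Weierstrass preparation: given a negative slope $\lambda$, one factors $F = P \cdot U$ with $P$ a polynomial whose Newton polygon coincides with the portion of $F$'s polygon of slopes $\geq -\lambda$, and $U$ a power series with no zeros in the disk $\{\val(z) \geq \lambda\}$. The existence of a root of $P$ of valuation $\lambda$ then follows from computing the valuation of the product of roots via elementary symmetric functions, matched against the rightmost slope of $P$'s polygon.

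The lemma is classical, so no serious obstacle is expected: the only real work is the ultrametric argument in part (2), and everything else reduces to Weierstrass preparation, which is standard. In a self-contained write-up it may be cleanest to cite Koblitz's $p$-adic analysis text for the Weierstrass step and prove part (2) directly.
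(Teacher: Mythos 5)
Your argument for part (2) — the functional $(k,v)\mapsto v+k\lambda$ attains a strict minimum at the vertex $(n,m)$, so the strong triangle inequality gives $\val(F(z))=n\lambda+m$ — is exactly the paper's proof, phrased slightly differently. For part (1) the paper simply cites Koblitz, and your sketch (contrapositive of part (2) for ``only if'', Weierstrass preparation for ``if'') is just the standard argument behind that citation, so the two write-ups coincide in substance.
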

\begin{proof}
The first statement is well known (see e.g. \cite[IV.4]{koblitz}). The second statement is easier: $n\lambda +m=\val (a_n z^n )$ is the unique minimum of $\{ a_k z^k :k\geq 0 \}$. Indeed if $\val (a_k z^k )\leq n\lambda +m$ then $(k,\val (a_k ))$ lies on or below the line $y+\lambda x=n\lambda +m$. However our assumptions mean that the Newton polygon is above this line, and $(n,m)$ is the unique point meeting the Newton polygon.
\begin{center}

\tikzset{every picture/.style={line width=0.75pt}} 

\begin{tikzpicture}[x=0.75pt,y=0.75pt,yscale=-1,xscale=1]

\draw  [dash pattern={on 0.84pt off 2.51pt}]  (148.33,56) -- (154,87) ;
\draw [shift={(154,87)}, rotate = 79.64] [color={rgb, 255:red, 0; green, 0; blue, 0 }  ][fill={rgb, 255:red, 0; green, 0; blue, 0 }  ][line width=0.75]      (0, 0) circle [x radius= 3.35, y radius= 3.35]   ;
\draw    (154,87) -- (196.33,135) ;
\draw [shift={(196.33,135)}, rotate = 48.59] [color={rgb, 255:red, 0; green, 0; blue, 0 }  ][fill={rgb, 255:red, 0; green, 0; blue, 0 }  ][line width=0.75]      (0, 0) circle [x radius= 3.35, y radius= 3.35]   ;
\draw    (196.33,135) -- (236.33,147) -- (267.33,158) ;
\draw  [dash pattern={on 0.84pt off 2.51pt}]  (298.33,161) -- (267.33,158) ;
\draw [shift={(267.33,158)}, rotate = 185.53] [color={rgb, 255:red, 0; green, 0; blue, 0 }  ][fill={rgb, 255:red, 0; green, 0; blue, 0 }  ][line width=0.75]      (0, 0) circle [x radius= 3.35, y radius= 3.35]   ;
\draw    (95.83,74) -- (296.83,196) ;

\draw (174,140) node [anchor=north west][inner sep=0.75pt]   [align=left] {{\small (n,m)}};
\draw (200,181) node [anchor=north west][inner sep=0.75pt]   [align=left] {{\small slope $\displaystyle -\lambda $}};
\draw (170,87) node [anchor=north west][inner sep=0.75pt]   [align=left] {{\small slope $\displaystyle \mu _{1}$}};
\draw (219,123) node [anchor=north west][inner sep=0.75pt]   [align=left] {{\small slope $\displaystyle \mu _{2}$}};

\end{tikzpicture}
\end{center}
\end{proof}
The following simple but fundamental result due to Coleman allows us to describe the slopes of $\int \omega $ purely in terms of the Coleman expansion.
\begin{lemma}\label{lemma:slop}
Suppose $\lambda $ is a (negative) slope of the Newton polygon of $\int _{z_0} \omega \in \Q _p ^{\nr }[\! [T]\! ]$. If $\lambda < \frac{1}{2g-2}$, then
\begin{equation}\label{eqn:slopes}
\lambda =\frac{1}{p^{n_{i+1} (\omega )}k_{i+1}(\omega ,\overline{z})-p^{n_i (\omega )}k_{i}(\omega ,\overline{z})}
\end{equation}
for some $i\geq 0$.
\end{lemma}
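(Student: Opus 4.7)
My plan is to read the Newton polygon of $F(T) := \int_{z_0}^z\omega$ off the Coleman expansion, whose $i$th summand $g_i(T^{p^{n_i}})/p^i$ starts in degree $p^{n_i}k_i$ with coefficient of valuation $-i$, and to identify the slopes $\lambda < 1/(2g-2)$ as the gaps between consecutive such leading contributions.

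First I would pin down the leading term of $g_i(T)$. From the recursion
\[
\nu_i = p^{-1}\phi^{(n_{i+1}-n_i)*}\nu_{i+1}^{\sigma^{n_{i+1}-n_i}} + dg_i
\]
viewed as a power series identity in $T$, the Frobenius-pullback term begins in degree $p^{n_{i+1}-n_i}k_{i+1}-1$, while $\nu_i$ begins in degree $k_i-1$ with unit leading coefficient (because $\overline{\nu_i}$ has order exactly $k_i-1$ at $\overline{z}$). When $k_i < p^{n_{i+1}-n_i}k_{i+1}$ the leading term of $dg_i$ is that of $\nu_i$, so integrating gives $g_i(T) = cT^{k_i}+O(T^{k_i+1})$ with $\val(c)=0$ in the case $p\nmid k_i$; Coleman's integrality of $g_i$ controls the case $p\mid k_i$. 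In particular, $g_i(T^{p^{n_i}})/p^i$ has its leading coefficient at degree $p^{n_i}k_i$ of valuation exactly $-i$.

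Next I would analyze the Newton polygon of $F$. A term from $g_j(T^{p^{n_j}})/p^j$ can contribute to the $T^d$-coefficient of $F$ only when $p^{n_j}\mid d$ and $d\geq p^{n_j}k_j$, and its contribution then has valuation $\geq -j$. Hence for $p^{n_i}k_i\leq d<p^{n_{i+1}}k_{i+1}$, the valuation of the $T^d$-coefficient of $F$ is $\geq -i$, and equality is attained exactly at $d=p^{n_i}k_i$, since the leading contribution from $g_i$ dominates in valuation over any contributions from $g_j$ with $j<i$ (those having valuation $\geq -j>-i$). Thus the Newton polygon of $F$ lies weakly above the piecewise-linear polygon through the points $(p^{n_i}k_i,-i)$ and meets it at each vertex; on any convex stretch of this polygon, these are precisely the vertices of the Newton polygon, and the slope between two consecutive ones is $-1/(p^{n_{i+1}}k_{i+1}-p^{n_i}k_i)$, which is the formula claimed.

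Finally, the condition $\lambda<1/(2g-2)$ serves to isolate the convex part of this polygon. Using $k_i\leq 2g-1$ (from the degree $2g-2$ of the canonical divisor) and $n_{i+1}\geq n_i+1$, any failure of convexity between three consecutive vertices, together with any extra vertex produced by the integrality correction when $p\mid k_i$, can only create slopes of magnitude $\geq 1/(2g-2)$. The main obstacle I expect is the careful bookkeeping that separates the small-slope regime $\lambda<1/(2g-2)$, where the vertices are exactly $(p^{n_i}k_i,-i)$, from the regime where spurious vertices could arise — in particular verifying that neither the $p\mid k_i$ correction nor any subdominant contribution from $g_j$ with $j<i$ produces a vertex strictly between $(p^{n_i}k_i,-i)$ and $(p^{n_{i+1}}k_{i+1},-(i+1))$ at slope less than $1/(2g-2)$.
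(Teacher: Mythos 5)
Your proof follows essentially the same route as the paper's: read the Coleman expansion to see that coefficients of valuation $-i$ first occur at degree $p^{n_i}k_i$, use $p>2g-2$ and $k_i\leq 2g-1$ to rule out cancellation there, conclude that the points $(p^{n_i}k_i,-i)$ lie on the Newton polygon, and compute the resulting slopes. You correctly identify the delicate point — whether the piecewise-linear path through $(p^{n_i}k_i,-i)$ is actually convex in the range where $\lambda<1/(2g-2)$ — which the paper dispatches with a one-line appeal to $p>2g-2$; be aware that your assertion that a failed-convexity (skipped-vertex) segment necessarily has magnitude $\geq 1/(2g-2)$ is not obviously true (e.g.\ a segment from $(k_0,0)$ to $(p^{n_2}k_2,-2)$ has magnitude $2/(p^{n_2}k_2-k_0)$, which can be $<1/(2g-2)$ when $2g\leq p<4g-2$), so that step needs the same care the paper's terse statement is silently assuming.
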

\begin{proof}
Write $\int _{z_0 }\omega =\sum a_n T^n $. The Coleman expansion implies that if $\val (a_m )\leq -i$, then $m\in p^{n_i }\cdot \Z_{\geq k_i }$. Moreover Riemann--Roch implies $k_i \leq 2g-1$, which (together with the fact that $p>2g-2$) implies that the first coefficient of the $T$-adic expansion of $\int _{z_0} \omega $ of valuation $-i$ is at $k_i p^{n_i }$. Again using the fact that $p>2g-2$, we deduce that the slopes to the left of the vertex $(k_0 ,0)$ are exactly of the form \eqref{eqn:slopes}. Since $k_0 \geq 2g-2$, the slopes to the left of $(k_0 ,0)$ must be at least $\frac{1}{2g-2}$.
\end{proof}
\subsection{Proof of Theorem \ref{thm:main1} part (2)}
Let $z$ be as in the statement of Proposition \ref{prop:coleman1}. To prove that $z$ is in $X(\Q _p ^{\mathrm{nr}})$, it will be enough to prove that, for all $z_0 \in ]\overline{z}[(\Q _p ^{\mathrm{nr}})$ and all integral parameters $T$ at $z_0$, $\val (T(z))\in \Z $. Indeed, if $z_0$ is any such point, $T$ is an integral parameter, and $z$ is a ramified point congruent to it, then we may take the least $n$ such that the image of $T(z)$ in $\overline{\Z }_p /p^n \overline{\Z }_p $ does not lie in $R/p^n R$. Let $z_0 ' \in ]\overline{z}[(\Q _p ^{\mathrm{nr}})$ be a point such that $T(z_0 ')-T(z)\in p^{n-1}\overline{\Z} _p$.

Fix such a $z_0 $ and such a $T\in \mathcal{O}_{X,\overline{z}}$. Define
\[
F_\omega (T):=\int _{z_0 } \omega |_{]\overline{z}[}\in \Q _p ^{\mathrm{nr}}[\! [T]\! ].
\]
Let $(n_i (\omega ),k_i (\omega ,\overline{z}))$ be the Coleman sequence of $F_{\omega }$. Let $\lambda $ be the valuation of $T(z)$. 
Recall that if $\lambda <1/(2g-2)$, then slopes adjacent to $\lambda $ are of the form
\[
\frac{1}{p^{n_i }k_i -p^{n_{i-1}}k_{i-1}}>\lambda >\frac{1}{p^{n_{i+1} }k_{i+1} -p^{n_{i}}k_{i}}
\]
and the endpoint is $(p^{n_i}k_i ,-i)$. Hence by Lemma \ref{lemma:slope_pictures} the valuation of $F_{\omega }(x)$ is $\lambda p^{n_i }k_i -i$, and hence satisfies
\[
\frac{p^{n_i }k_i }{p^{n_i }k_i -p^{n_{i-1}}k_{i-1}}>\val (F_{\omega }(x))+i >\frac{p^{n_i}k_i }{p^{n_{i+1} }k_{i+1} -p^{n_{i}}k_{i}}.
\]
Our assumptions on $p$ and $g$ imply that the only way this can be integral is if $\lambda p^{n_i }k_i =1$. By Lemma \ref{lemma:slop}, it follows that if $\lambda <\frac{1}{2g-2}$ and $\int ^z _b \omega \in \Q _p ^{\mathrm{nr}}$, then it must be the case that for all $\omega \in H^0 (X,\Omega )-pH^0 (X,\Omega )$, $\lambda $ is equal to either $\frac{1}{p^{n_i }k_i }$ or $\frac{1}{p^{n_i }k_i -p^{n_{i-1}}k_{i-1}}$ for some $i$ (where $n_i$ is short for $n_i (\omega )$, and $k_i$ is short for $k_i (\omega ,\overline{z})$). Since the $k_i$ are all integers between $1$ and $2g-2$, and $p>2g$, this is only possible if there is some $k$ and $n$ such that for all $\omega $, for some $i$, $k_i (\omega ,\overline{z})=k$ and $n_i (\omega )=n$. Note that in particular this means that for all $\omega $, $\ord _{\overline{z}}(\overline{V^n (\omega )})=k-1$. 

However this is impossible.  Indeed, let $\omega _1 $ and $\omega _2 $ be two global differentials which are linearly independent modulo $p$, suppose $n=n_i (\omega _1 )=n_j (\omega _1 )$, $k=k_i (\omega _1 ,\overline{z})=k_j (\omega _2 ,\overline{z})$ and $i\leq j$. Suppose furthermore that the valuation of $V^n (\omega _2 )$ is maximal among all differentials in $H^0 (X,\Omega )-pH^0 (X,\Omega )$. Choose $\alpha \in R^{\times }$ such that 
\[
\ord _{\overline{z}}(\overline{V^n ([\omega _1 ])}-\alpha \overline{V^n ([\omega _2 ])})>k-1
\]
(such an $\alpha$ exists since both differentials have order equal to $k$ at $\overline{z}$). It follows that either 
\[
\ord_{\overline{z}}(\overline{V^n (p^{j-i}\omega _1 -\alpha \omega _2 )})>k-1
\] 
or 
\[
\val (V^n ([p^{j-i}\omega _1 -\alpha \omega _2 ]))>\val (V^n ([\omega _2 ])),
\]
either of which results in a contradiction.

If $\lambda <\frac{1}{2g-2}$ and $\lambda $ is a negative slope of some $\int \omega $, then $\lambda $ is of the form $\frac{1}{p^{n_i (\omega )}k_i (\omega ,\overline{z})-p^{n_{i-1}(\omega )}k_{i-1} (\omega ,\overline{z})}$ for some $i$ and $\omega $. By Riemann--Roch, the $k_i$ are less than $p$, and hence there are \textit{unique} non-negative integers $a,b $ with $a<b$, and $k,\ell $ with $0\leq k,\ell \leq 2g-2$, such that
\[
\lambda = \frac{1}{kp^b-\ell p^a }
\]
If $\eta $ is any global one-form, and $\val (\int ^z _{z_0 }\eta )\in \Z $, then either $\lambda $ is a negative slope of $\int _{z_0 }\eta $, or there is an integer $j$ such that 
\[
\frac{p^{n_j (\eta )}k_j (\eta ,\overline{z})}{p^{n_i (\omega )}k_i (\omega ,\overline{z})-p^{n_{i-1}(\omega )}k_{i-1} (\omega ,\overline{z})}\in \Z  ,
\]
which is impossible. However, arguing as above, if $\eta $ is any $1$-form linearly independent of $\omega $, then there is a linear combination $\eta ' =\alpha \omega +\beta \eta $ such that there is no $j$ with $n_j (\eta ' )=n_i (\omega )$ and $k_j (\eta ' ,\overline{z})=k_j (\omega ,\overline{z})$.

Finally suppose that $\lambda \geq 1/{2g}$. Let $\omega \in H^0 (X_R ,\Omega )$ be a differential which does not vanish modulo $p$ at $\overline{z}$. Then $\val (F_{\omega }(z))=\lambda $, which by assumption is not in $\Z $, completing the proof.

\section{Proof of part (3): The Chabauty--Coleman step}
\begin{proof}[Proof of Theorem \ref{thm:main1} part (3)]
Assume we are in the setting of part (3) of Theorem \ref{thm:main1}. Let $S \subset X(\overline{\F }_p )$ denote the set of points whose residue disk contains a point lying on $\Gamma $. By part (2), $S$ contains a point of $\Gamma $ if and only if $S$ contains an unramified point of $\Gamma $, hence
\[
\# S\leq p^{3g+r}3^g [p(2g-2)+6g]g!.
\]
To prove part (3) of Theorem \ref{thm:main1}, it will hence be enough to prove
\[
N(S):=\sum _{s \in S}\left( \# X(\Q _p ^{\mathrm{nr}})\cap \Gamma \cap ]s[(\Q _p ^{\mathrm{nr}})-1 \right) \leq 2r.
\]
Since $r=\dim \Gamma \otimes _{\Z }\Q $ is less than $g$, we have a $g-r$ dimensional space $\Gamma ^{\perp}$ of vanishing differentials $\omega $, i.e.
\[
\Gamma ^{\perp}:=\{ \omega \in H^0 (X_{\Z _p ^{\mathrm{nr}}},\Omega ):\forall P\in \Gamma ,\int ^P \omega =0 \}.
\]
Here we view $\omega $ as a differential on $J$ via pullback along the Abel--Jacobi morphism. For $s\in S$, define
\[
n(s):=\min \{\ord _s \overline{\omega }:\omega \in \Gamma ^{\perp}-p\cdot \Gamma ^{\perp} \}
\]
By Coleman \cite{coleman:chabauty}, we have 
\[
\# X(\Q _p ^{\mathrm{nr}})\cap \Gamma \cap ]s[(\Q _p ^{\mathrm{nr}})-1 \leq n(s).
\]
By Stoll \cite[Theorem 6.4]{independence}, we have
\[
\sum _{s\in S}n(s)\leq 2r
\]
hence $N(S)\leq 2r$, completing the proof of the Theorem.
\end{proof}

\section{Mordell implies Mordell--Lang}
In this section we give the proof of Corollary \ref{cor:1}. Let $X$ be a curve over a number field $K$, and $\Gamma \subset J(\overline{K})$ the divisible hull of the subgroup generated by the $K$-points of $J$. By part (2) of Theorem \ref{thm:main1}, and the Hermite--Minkowski theorem, it is sufficient to establish the following.
\begin{proposition}\label{prop:2}
There is an integer $N>0$ such that every point of $X\cap \Gamma $ is defined over an extension of $K$ of degree less than $N$.
\end{proposition}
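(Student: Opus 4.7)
The plan is to bound $[K(P):K]$ uniformly for $P \in X(\overline{K}) \cap \Gamma$ by applying part (3) of Theorem \ref{thm:main1} to a rank-$\leq 1$ subgroup of $J(\Q_p^{\nr})$ built from $\iota(P)$ and torsion. Once such a uniform bound is in hand, Proposition \ref{prop:2} is immediate.

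First I would choose a prime $\mathfrak{p}$ of $K$ with residue characteristic $p \geq 2g$, at which $K$ is unramified and $X$ has good reduction (valid for all but finitely many primes), and fix an embedding $\overline{K} \hookrightarrow \overline{\Q}_p$ restricting to $K \hookrightarrow K_{\mathfrak{p}} \subset \Q_p^{\nr}$. For $P \in X(\overline{K}) \cap \Gamma$, by definition of $\Gamma$ there is some $n > 0$ with $n\iota(P) \in J(K) \subset J(\Q_p^{\nr})$, so $\iota(P) \in J(\Q_p^{\nr})_{\divv}$; the same holds for every Galois conjugate $\sigma(P)$ since $\Gamma$ is $\Gal(\overline{K}/K)$-stable. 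Part (2) of Theorem \ref{thm:main1} then places $P$ and its entire $\Gal(\overline{K}/K)$-orbit inside $X(\Q_p^{\nr})$.

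Next I would consider
\[
\Gamma ' := \Z \cdot \iota(P) + J(\Q_p^{\nr})_{\tor} \subset J(\Q_p^{\nr}),
\]
a subgroup of rank $r \leq 1$ with $\{\iota(P)\}$ as a basis defined over $\Q_p^{\nr}$. For each $\sigma \in \Gal(\overline{K}/K)$, the difference $\iota(\sigma P) - \iota(P) = \sigma(\iota(P)) - \iota(P)$ is annihilated by $n$ and lies in $J(\Q_p^{\nr})$, hence in $J(\Q_p^{\nr})_{\tor}$, so $\iota(\sigma P) \in \Gamma'$. The hypothesis $g \geq 2$ ensures $r \leq 1 < g$, so part (3) of Theorem \ref{thm:main1} gives
\[
[K(P):K] \leq \# \bigl( X(\overline{\Q}_p) \cap \Gamma ' \bigr) \leq p^{3g+1}3^{g}[p(2g-2)+6g]g! + 2,
\]
independently of $P$. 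Setting $N$ to be one more than this bound finishes the proof.

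The bulk of the work is thus a reduction to Theorem \ref{thm:main1} rather than any new analytic input. The main point requiring care is verifying that $\Gamma'$ actually satisfies the hypotheses of part (3): that $\iota(P) \in J(\Q_p^{\nr})$ (so the basis is rational) and that each $\iota(\sigma P) - \iota(P) \in J(\Q_p^{\nr})$ (so it is torsion in $\Gamma'$ rather than just in $J(\overline{\Q}_p)_{\tor}$). Both facts are exactly what part (2) provides, applied to $P$ and $\sigma P$ respectively; absent such an unramifiedness result one would not be able to fit the Galois orbit into a subgroup rational over the unramified field.
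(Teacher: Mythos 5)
The proposal is correct, and it takes a genuinely different route from the paper. The paper proves Proposition~\ref{prop:2} by reduction to Proposition~\ref{prop:4} (non-Zariski-density of $(S_n)^m$ in $X^{nm}$), which is established via Ax's functional Mordell--Lang theorem (Theorem~\ref{thm:ax}); only parts (1) and (2) of Theorem~\ref{thm:main1} are used. You instead apply part (3) directly, after the key observation that for each $P \in X \cap \Gamma$ the \emph{entire Galois orbit} of $\iota(P)$ lies in the rank-$\leq 1$ subgroup $\Gamma' = \Z\cdot\iota(P) + J(\Q_p^{\nr})_{\tor}$: since some multiple $n\iota(P)$ lies in $J(K)$ and is Galois-fixed, each conjugate $\iota(\sigma P)$ differs from $\iota(P)$ by $n$-torsion, and part (2) guarantees both $\iota(P)$ and $\iota(\sigma P)$ (hence their difference) lie in $J(\Q_p^{\nr})$. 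As $g\geq 2$, $r\leq 1 < g$ and part (3) applies, giving an explicit uniform bound on $[K(P):K]$. This is shorter and more elementary than the paper's argument (it replaces Ax--Schanuel plus the $X^{nm}$ induction by the Chabauty--Coleman input already built into part (3)), and it produces an explicit numerical $N$. The only point worth flagging explicitly is the one the paper also handles silently: the argument assumes $\Gamma\subset J(K)_{\divv}$, i.e.\ that $K$ has already been enlarged so that $\Gamma\otimes\Q$ has a basis over $K$, which is what lets you write $n\iota(P)\in J(K)$.
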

What we in fact prove is the following.
\begin{proposition}\label{prop:3}
There is a finite extension $K/\Q $ and an integer $n$ such that 
\[
S_n :=\{ (\sigma _1 (x),\ldots ,\sigma _n (x)):x\in X\cap \Gamma ,\sigma \in \Gal (K) \}
\]
is not Zariski dense in $X^n $.
\end{proposition}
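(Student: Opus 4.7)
The plan is to apply Raynaud's theorem on the Manin--Mumford conjecture (a special case of geometric Mordell--Lang) to the image of $X^n$ in $J^{n-1}$ under a difference map, exploiting the fact that the Galois conjugates of any point in $\Gamma = J(K)_{\divv}$ differ from each other only by torsion elements of $J$.

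First I would verify this torsion-difference observation. Since $J(K)$ is finitely generated by Mordell--Weil, for each $x \in X(\overline{K}) \cap \Gamma$ there exists $N = N(x) \in \Z_{>0}$ with $N\iota(x) \in J(K)$. Then for any $\sigma \in \Gal(\overline{K}/K)$ one has $N\bigl(\iota(\sigma x) - \iota(x)\bigr) = \sigma(N\iota(x)) - N\iota(x) = 0$, so $\iota(\sigma x) - \iota(x) \in J[N] \subset J_{\tor}$.

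Next, for $n \geq 2$, introduce the difference map
\[
\Delta : X^n \longrightarrow J^{n-1}, \qquad (x_1, \ldots, x_n) \longmapsto \bigl(\iota(x_1) - \iota(x_2), \ldots, \iota(x_{n-1}) - \iota(x_n)\bigr),
\]
with Zariski-closed image $W \subset J^{n-1}$. The observation above gives the key inclusion $\Delta(S_n) \subset W \cap J_{\tor}^{n-1}$. By Raynaud's theorem, $W \cap J_{\tor}^{n-1}$ is a finite union of torsion cosets $c_i + B_i$ of abelian subvarieties $B_i \subset J^{n-1}$, each satisfying $c_i + B_i \subset W$.

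The crucial point is then to show that for $n$ sufficiently large (specifically $n > g/(g-1)$, so $n \geq 3$ works for every $g \geq 2$), $W$ itself is not a translate of a positive-dimensional abelian subvariety of $J^{n-1}$. Since $\iota(X)$ has finite stabilizer in $J$ for $g \geq 2$, the map $\Delta|_{X^n}$ is generically finite (its only positive-dimensional fiber is the diagonal $\Delta_X \cong X$ lying over $0 \in W$), so $\dim W = n$; choosing $n > g/(g-1)$ forces $\dim W < \dim J^{n-1}$, making $W$ a proper subvariety of $J^{n-1}$. A direct argument on the birational structure of $X^n$ (which is of general type for $g \geq 2$) then rules out $W$ being an abelian coset. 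Granting this, each $c_i + B_i$ is a proper subvariety of $W$, hence $\dim(W \cap J_{\tor}^{n-1}) \leq n-1$, and its preimage $\Delta^{-1}(W \cap J_{\tor}^{n-1}) \cap X^n$ has dimension at most $\max(n-1,1) = n-1$. Since $S_n$ is contained in this preimage, it lies in a proper Zariski-closed subset of $X^n$.

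The main obstacle, I expect, is the exclusion of $W$ being an abelian coset: for small $n$ relative to $g$ (notably $n = g = 2$, where $\Delta(X^2) = J$ fills out the whole Jacobian), the argument collapses, so one must carefully choose $n$ large enough and invoke either the general-type property of $X^n$ or a direct stabilizer analysis of $W$ inside $J^{n-1}$ to rule out exotic abelian structure arising from special curves (e.g., bielliptic).
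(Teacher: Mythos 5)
Your proof is correct in essence but takes a genuinely different route from the paper. The paper does not prove Proposition~\ref{prop:3} directly; it reduces to Proposition~\ref{prop:4} (showing $(S_n)^m$ is not Zariski dense in $X^{nm}$), whose proof relies on the ``low-rank'' observation that for $m > \rk J(K)$ every $m$-tuple in $X\cap\Gamma$ satisfies a nontrivial linear relation over $\Z$ inherited by all Galois conjugates, combined with parts~(1) and~(2) of Theorem~\ref{thm:main1} to confine $S_n^m$ to finitely many unramified residue polydisks, and finally Ax--Schanuel for abelian varieties applied disk by disk. Your argument instead exploits the elegant fact that Galois conjugates of points in $J(K)_{\divv}$ differ by torsion, pushes $S_n$ through a difference map $\Delta:X^n\to J^{n-1}$, and invokes Raynaud's theorem on torsion points in subvarieties of abelian varieties. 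This is shorter and avoids Theorem~\ref{thm:main1} entirely, but it imports full Manin--Mumford for subvarieties of abelian varieties as a black box, which cuts against the paper's stated aim of deducing Mordell--Lang from Mordell with only $p$-adic inputs about curves (the paper uses Buium--Coleman for $X$ itself, not Raynaud for $W\subset J^{n-1}$).

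One small repair: the ``general type'' argument you sketch to rule out $W$ being an abelian coset is shaky, since a generically finite dominant image of a general-type variety need not be of general type. The clean argument is: $W$ contains $0$ (take all $x_i$ equal), so if $W$ were a torsion coset it would be an abelian subvariety; but $\Delta(X^n)$ contains $\iota(X)\times 0\times\cdots\times 0$ and its permutations, so any algebraic subgroup containing $\Delta(X^n)$ is all of $J^{n-1}$, forcing $\dim W=(n-1)g$, whereas $\dim W\leq n$; for $n\geq 3$ (and $g\geq 2$) this is a contradiction. With that fix, and noting that the preimage under the dominant map $\Delta$ of a proper closed subset of $W$ is a proper closed subset of $X^n$ (which is all you need, so the dimension count is superfluous), your proof is sound.
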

\begin{lemma}
Proposition \ref{prop:3} implies Proposition \ref{prop:2}.
\end{lemma}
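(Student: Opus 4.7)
The plan is to leverage the Galois-stability of $\Gamma$ together with a combinatorial lemma about Cartesian powers in proper subvarieties of $X^n$.

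First I would verify that $\Gamma$ is $\Gal(K)$-stable. Indeed, if $x\in\Gamma$ then $mx\in J(K)$ for some positive integer $m$, so for any $\sigma\in\Gal(K)$ we have $m(\sigma(x)-x)=\sigma(mx)-mx=0$, giving $\sigma(x)-x\in J[m](\bar{K})\subset J(\bar{K})_{\tor}\subset\Gamma$, whence $\sigma(x)\in\Gamma$. Consequently $X\cap\Gamma$ is Galois-stable and decomposes as a disjoint union of finite Galois orbits $O_x$ with $|O_x|=[K(x):K]$, so Proposition \ref{prop:2} is equivalent to a uniform bound on $|O_x|$.

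Assuming Proposition \ref{prop:3}, $S_n\subset Y$ for some proper closed subvariety $Y\subsetneq X^n$. The key observation is that $O_x^n\subset S_n$ for every $x\in X\cap\Gamma$: fixing a single $x$ and letting $(\sigma_1,\ldots,\sigma_n)$ range over $\Gal(K)^n$, the tuple $(\sigma_1(x),\ldots,\sigma_n(x))$ traces out precisely the $n$-fold product of $O_x$. Hence $O_x^n\subset Y$ for every $x$, and the proposition reduces to the purely geometric claim: \emph{for any proper closed subvariety $Y\subsetneq X^n$ there exists $N_Y$ such that any $A\subset X(\bar K)$ with $A^n\subset Y$ satisfies $|A|\leq N_Y$}. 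Applied to $A=O_x$ this yields $[K(x):K]\leq N_Y$, which is Proposition \ref{prop:2} (after possibly replacing $K$ by the finite extension from Proposition \ref{prop:3}, which only multiplies the bound by a finite factor).

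The combinatorial claim I would prove by induction on $n$. For $n=1$, $Y$ is finite and $N_Y=|Y|$ suffices. For $n\geq 2$, since $\dim Y\leq n-1$, the projection $\pi:Y\to X^{n-1}$ onto the last $n-1$ coordinates has generic fiber of dimension zero, so there exist a nonempty open $U\subset X^{n-1}$ and $M\in\Z_{\geq 0}$ such that $|Y_{\mathbf b}|\leq M$ for every $\mathbf b\in U$, where $Y_{\mathbf b}:=\{a\in X:(a,\mathbf b)\in Y\}$. Given $A\subset X(\bar K)$ with $A^n\subset Y$: if $A^{n-1}\cap U\neq\emptyset$, choose any $\mathbf b\in A^{n-1}\cap U$; then $(a,\mathbf b)\in A^n\subset Y$ for every $a\in A$, so $A\subset Y_{\mathbf b}$ and $|A|\leq M$. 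Otherwise $A^{n-1}$ lies in the proper closed subvariety $X^{n-1}\setminus U$ of $X^{n-1}$, and the inductive hypothesis supplies a bound. The main subtlety is pinning down the right combinatorial claim; the induction itself uses only the irreducibility of $X$ (so that nonempty opens are dense) and the basic fiber-dimension theorem.
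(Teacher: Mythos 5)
Your proof is correct and uses essentially the same mechanism as the paper: induct on $n$, project to $X^{n-1}$, and split according to whether the projection is generically finite (degree bound $M$) or not (apply the inductive hypothesis to a proper closed subvariety of $X^{n-1}$). The main expository difference is that you isolate a clean, purely geometric lemma — any $A\subset X(\overline K)$ with $A^n$ inside a fixed proper closed $Y\subsetneq X^n$ has $|A|$ bounded in terms of $Y$ — and then feed in $A=O_x$, whereas the paper carries the Galois-stable subvariety $Z\supset S_n$ through the induction directly; your packaging is slightly cleaner and avoids having to track Galois-stability of $Z'$ at each stage. One small remark: the preliminary verification that $\Gamma$ is $\Gal(K)$-stable, while correct (divisible hulls absorb torsion), is not actually needed, since $S_n$ is Galois-stable coordinate-wise by its very definition and $[K(x):K]=|O_x|$ holds for any algebraic point.
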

\begin{proof}
Let $Z$ be any proper closed subvariety of $X^n$ with the property that,  for all $\sigma _1 ,\ldots ,\sigma _n \in \Gal (K)$, if $
(z_1 ,\ldots ,z_n )$ is in $Z(\overline{K})$ then $(\sigma _1 (z_1 ),\ldots ,\sigma _n (z_n ))$ is in $Z(\overline{K})$. 
We show by induction on $n$ that the projection of $S_n \cap Z$ to $X$ (by the first coordinate) consists of points of degree which can be bounded in terms of $Z$. 

The case $n=1$ is obvious. Let $\pi :Z\to X^{n-1}$ be projection onto the first $n-1$ coordinates. If $\pi $ is not generically finite, then $\dim (\pi (Z))<n-1$, and we may apply our inductive hypothesis, to $\pi (Z\cap S_n )=\pi (Z)\cap (S_n )$ to deduce the result. 

Hence we may assume that the projection map $\pi :Z\to X^{n-1}$ is generically finite, say finite outside a closed subset $Z' \subset X^{n-1}$. Since $Z$ and $\pi $ are defined over $K$, $Z'$ also has the property that, for all $\sigma _i \in \Gal (K)$, $(z_i )\in Z' (\overline{K})$ implies $(\sigma _i (z_i ))\in Z' (\overline{K})$. Hence by the argument above the projection of $\pi ^{-1}(Z')\cap S_n $ to $X$ consists of points of bounded degree. Let $U'$ denote the complement of $Z'$ in $\pi (Z)$, and let $U$ denote $\pi ^{-1}(U')$. By the argument above, the projection of $\pi ^{-1}(Z')\cap S_n$ to $X$ consists of points of bounded degree, hence it is enough to show that the projection of $U\cap S_n $ to $X$ consists of points of bounded degree.

Let $M$ denote the degree of the finite map $\pi :U\to U'$. Let $(\sigma _1 (x),\ldots ,\sigma _n (x))$ be a point in $U\cap S_n$. If $(\sigma _1 (x),\ldots ,\sigma _n (x))$ is in $U$ for some $\sigma _i \in \Gal (K)$, then $(\sigma _1 (z),\ldots ,\sigma (z))$ is in $U$ for \textit{all} $\sigma $ in $\Gal (K)$, i.e.
\[
[K(z):K]\leq M.
\]
\end{proof}
Our next act of subterfuge is reducing to the following proposition.
\begin{proposition}\label{prop:4}
There is an integer $m$ such that $(S_n ) ^m \subset X^{nm}({\overline{K}})$ is not Zariski dense in $X^{nm}_{\overline{K}}$.
\end{proposition}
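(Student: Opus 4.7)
My plan is to argue by contradiction: suppose $(S_n )^m$ is Zariski dense in $X^{nm}_{\overline{K}}$ for every integer $m\geq 1$. Projecting to the first $n$ coordinates of $X^{nm}=(X^n )^m$ forces $S_n $ to be Zariski dense in $X^n $, and a further projection to one coordinate shows $T:=X(\overline{K})\cap \Gamma $ is Zariski dense in $X$. Since $X$ is an irreducible curve over $\overline{K}$, this means $T$ is infinite.

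Next I would import the $p$-adic constraints of parts (1) and (2) of Theorem \ref{thm:main1}. Fix a prime $\mathfrak{p}$ of $K$ lying over an unramified rational prime $p\geq 2g$ at which $X$ has good reduction; through the resulting embedding $J(K)\hookrightarrow J(K_{\mathfrak{p}}^{\mathrm{nr}})$, the finite-rank group $\Gamma =J(K)_{\divv}$ has a $\Q $-basis defined over $\Q _p ^{\mathrm{nr}}$. Part (2) then gives $T\subset X(K_{\mathfrak{p}}^{\mathrm{nr}})$ and part (1) bounds the image of $T$ under reduction modulo $\mathfrak{p}$ by the explicit constant $C$ appearing in Theorem \ref{thm:main1}(1). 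Running $\mathfrak{p}$ through all such primes shows $K(T)$ is unramified outside a finite set $S_0$ of primes of $K$, so by Hermite--Minkowski only finitely many subfields of $K(T)$ of each bounded degree are unramified outside $S_0$. If the Galois degrees $[K(x):K]$ were uniformly bounded on $T$, then $T$ would be contained in a single finite extension $L/K$, and Mordell applied to $X_L$ would force $T\subset X(L)$ to be finite, contradicting $|T|=\infty $. So the Galois degrees on $T$ must be unbounded.

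The main obstacle is then converting the combination of unbounded Galois degrees and uniformly bounded reduction into the desired non-density of $(S_n )^m$ for some large $m$. The strategy I would pursue is a pigeonhole argument on the reduction map, in the spirit of the Buium--Coleman step: for any $x\in T$ with $[K(x):K]>C^n$, the Galois orbit of $x$ contributes $[K(x):K]^n$ elements of $S_n $ whose reductions modulo $\mathfrak{p}$ lie in a set of size at most $C^n$, producing large fibres; iterating over $m$ blocks, the map $(S_n )^m\to X(\overline{\F }_p )^{nm}$ has image of size at most $C^{nm}$ but arbitrarily large fibres as we feed in $x_i $'s of larger and larger Galois degree. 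Promoting this combinatorial count, for $m$ sufficiently large, into an actual algebraic relation cutting out a proper closed subvariety of $X^{nm}$ containing $(S_n )^m $ is the hard part, and is precisely where taking $m$ large (rather than $m=1$) is intended to matter in the ``subterfuge''. I would attempt this either via a Vojta--Faltings style dimension inequality on products of curves, or via a Frobenius-lift argument on the formal neighbourhoods of the points of $\red (T)$ in the style of Hrushovski's approach to Mordell--Lang.
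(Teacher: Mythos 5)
Your approach diverges substantially from the paper's, and it has a genuine gap that you yourself flag: you cannot convert the combinatorial pigeonhole observation (large Galois orbits over a bounded set of residue classes) into an algebraic relation cutting out a proper closed subvariety. This gap is essential, not cosmetic. The reduction map $X(\Q_p^{\nr}) \to X(\overline{\F}_p)$ already has infinite fibres over every residue disk, and a fixed finite image, yet is ``Zariski dense'' in any reasonable sense; so ``bounded image + arbitrarily large fibres'' on its own says nothing about non-density. You would need a concrete algebraic object (a differential form, a cohomology class, a determinantal relation) that vanishes on $(S_n)^m$ and does not vanish identically, and the pigeonhole count does not produce one.

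The paper's proof supplies exactly the missing idea, and it is not of Vojta--Faltings or Hrushovski type. Let $d = \rk J(K)$ and take $m > d$. For any $m$ points $x_1,\ldots,x_m$ of $X\cap\Gamma$, finite rank forces a nontrivial integer relation $\sum n_i x_i = 0$ with $n_i x_i \in J(K)$; crucially, since each $n_i x_i$ is $K$-rational, the same relation holds after applying arbitrary $\sigma_{ij}\in\Gal(K)$ coordinatewise. This puts $(S_n)^m$ inside the low-rank locus $Y^{d+1}_{\rk\leq d}$ with $Y=X^n$. Parts (1) and (2) of Theorem \ref{thm:main1} reduce the problem to a single residue polydisk, where the Coleman integration map $F:\,](\overline z_{ij})[\,\to\Mat_{gn,m}$ sends the low-rank locus into the determinantal variety $\mathbb{D}_d$, which has codimension $(ng-d)(m-d)$. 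Ax's theorem for abelian varieties (Theorem \ref{thm:ax}) then shows $F^{-1}(\mathbb{D}_d)$ is not Zariski dense once $mn\geq(ng-d)(m-d)$. So the role of taking $m$ large is precisely to manufacture the Galois-invariant linear relation coming from finite rank, and the linearity of Coleman integrals is what turns this into a determinantal (hence algebraic) condition; neither ingredient appears in your sketch. Your preliminary observations (that non-density for all $m$ would force $T$ infinite, that $T$ lands in $X(\Q_p^{\nr})$ with bounded reduction, and that unbounded Galois degrees would follow from Mordell) are correct but do not by themselves close the argument.
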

Note that Proposition \ref{prop:4} implies Proposition \ref{prop:3}, since the closure of a product is the product of the closures.

We now deduce Proposition \ref{prop:4} from parts (1) and (2) of Theorem \ref{thm:main1}. Let $p$ be as in parts (1) and (2) of Theorem \ref{thm:main1}. Let $d$ be the rank of $J(K)$, and take $m>d$. Then for every $(x_1 ,\ldots ,x_m ) \in X\cap \Gamma $, there are integers $n_i$, not all zero, such that $n_i x_i \in J(K)$ for all $i$, and 
\[
\sum n_i x_i =0.
\]
Since $n_i x_i \in J(K)$, for all $\sigma _i \in \Gal (K)$, we have
\[
\sum n_i \sigma _i (x_i )=0.
\]
We deduce that if $n>d$, then for any $\sigma _{ij} $ in $\Gal (K)$, if $Y:=X^n$ and $A:=J^n$ then $(\sigma _{ij} (x_j ))$ is a low rank point of $Y^{d+1}\subset A^{d+1}$, i.e.
\begin{equation}\label{eqn:lowrank}
(S_n )^m \subset Y(\overline{K})^{d+1}_{\rk \leq d},
\end{equation}
where for a subvariety $V$ of an abelian variety $A^n$, and $r<n$, we define
\[
V(K)_{\rk \leq r}:=\{ (x_1, \ldots ,x_n )\in V(K):\rk \langle x_1 ,\ldots ,x_n \rangle \leq r \}
\]
where $\langle x_1 ,\ldots ,x_n \rangle $ denotes the subgroup of $A(K)$ generated by $x_1 ,\ldots ,x_n $.
\subsection{Low rank points and Coleman integrals}
By part (2) of Theorem \ref{thm:main1}, $S_n ^m \subset X^{nm}(\Q _p ^{\mathrm{nr}})$. By part (1) of Theorem \ref{thm:main1}, the image of $S_n ^m$ in $X(\overline{\F }_p )^{nm}$ is finite. Hence to prove Proposition \ref{prop:4} it is enough to prove that, for each $(\overline{z}_{ij})$ in $X(\overline{\F }_p )^{nm}$, $S_n ^m \cap ](\overline{z}_{ij})[(\Q _p ^{\mathrm{nr}})$  is not Zariski dense in $X^{nm}_{\overline{\Q }_p }$. Hence by \eqref{eqn:lowrank}, Proposition \ref{prop:4} is implied by the following lemma.
\begin{proposition}
If $mn\geq (ng-d)(m-d)$, then for any $(\overline{z}_{ij})$ in $X(\overline{\F }_p )^{nm}$,
\[ 
Y(\Q _p ^{\mathrm{nr}})^{m}_{\rk \leq d}\cap ](\overline{z}_{ij})[(\Q _p ^{\mathrm{nr}})
\]
is not Zariski dense in $Y^m _{\Q _p ^{\mathrm{nr}}}$.
\end{proposition}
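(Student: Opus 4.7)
The proposition is a higher-dimensional Chabauty--Coleman statement. The plan is to encode the rank-$\leq d$ condition as the vanishing of minors of a matrix of Coleman integrals, decompose the low-rank locus in $A^m$ into a countable family of translates $H_c+t$ of proper abelian subvarieties, and use the hypothesis $mn\geq(ng-d)(m-d)$ together with a finiteness argument in the residue disk to conclude non-Zariski-density in $Y^m$.

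By part (2) of Theorem \ref{thm:main1}, every point of $Y(\Q_p^{\mathrm{nr}})^{m}_{\rk\leq d}\cap ](\overline{z}_{ij})[(\overline{\Q}_p)$ lies in $X(\Q_p^{\mathrm{nr}})^{nm}$, so the relevant Coleman integrals take values in $\Q_p^{\mathrm{nr}}$. Fix a basis $\omega_1,\ldots,\omega_g$ of $H^0(X_{\Q_p^{\mathrm{nr}}},\Omega)$. For a tuple $(x_{ij})\in X^{nm}$ in the residue disk, form the $m\times ng$ matrix $M(x)=\bigl(\int_{z_{ij}}^{x_{ij}}\omega_k\bigr)_{i,(j,k)}$. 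The rank-$\leq d$ condition on the corresponding tuple in $A^m=(J^n)^m$ becomes $\rk(M(x))\leq d$, i.e.\ the vanishing of every $(d+1)\times(d+1)$ minor of $M$.

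Next, any rank-$\leq d$ tuple $(v_1,\ldots,v_m)\in A^m(\Q_p^{\mathrm{nr}})$ satisfies $m-d$ independent integer relations modulo torsion, placing it in a translate $H_c+t$ of the abelian subvariety $H_c=\Ker(\phi_c\colon A^m\to A^{m-d})$ of codimension $(m-d)ng$ in $A^m$, for some primitive $c\in \Mat_{(m-d)\times m}(\Z)$ and torsion point $t$. Since $X$ has genus $g>1$ and the Abel--Jacobi map is a closed immersion, $Y^m$ is not contained in any proper translate $H_c+t$ for non-trivial $c$: indeed, $Y^m\subset H_c+t$ would force $\phi_c$ to be constant on $Y^m=X^{nm}$, and varying one $x_{ij}$ at a time shows $c=0$. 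Hence each $Y^m\cap(H_c+t)$ is a proper algebraic subvariety of $Y^m$. The full low-rank locus in $A^m$ is constructible of dimension $d(m+ng-d)$, so its intersection with $Y^m$ has expected dimension $mn-(m-d)(ng-d)$; the hypothesis $mn\geq(ng-d)(m-d)$ says this is non-negative, while $(m-d)(ng-d)>0$ (from $m>d$ and $d<ng$, which we may assume in the setup) ensures it is strictly less than $mn$.

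The main obstacle is passing from this ``Coleman-analytic'' dimension count to genuine non-Zariski-density: a countable union of proper subvarieties of $Y^m$ can in general be Zariski dense, so one must show that only finitely many $(c,t)$ contribute non-trivially to the fixed residue disk $](\overline{z}_{ij})[$. This should follow from: (i) the good-reduction assumption together with part (2), which restrict torsion points $t$ meeting each residue disk to a finite set; and (ii) a $p$-adic bounded-height argument on $c$---the requirement that $\phi_c$ send the $mn$-dimensional analytic image $\log(Y^m\cap](\overline{z}_{ij})[)\subset(\Lie A)^m$ into a fixed translate of $\log(t)$ constrains $c$ to a finite subset of $\Mat_{(m-d)\times m}(\Z)$ up to scaling. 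Once this finiteness is in place, the low-rank locus in the residue disk lies in a \emph{finite} union of proper algebraic subvarieties of $Y^m$, and hence is not Zariski dense.
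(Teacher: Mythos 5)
Your proposal correctly sets up the matrix of Coleman integrals $M(x)$ and identifies that the rank-$\leq d$ condition forces $F(x)\in\mathbb{D}_d$, the determinantal variety of codimension $(ng-d)(m-d)$. The dimension count is right. But there is a genuine gap exactly where you flag ``the main obstacle'': you need to pass from the Coleman-analytic rank condition to non-Zariski-density, and your proposed route --- decomposing $Y(\Q_p^{\nr})^m_{\rk\leq d}$ into a countable family of translates $H_c+t$ and then claiming only finitely many $(c,t)$ meet the residue disk --- does not go through.

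The finiteness assertion (ii) is not justified, and in fact cannot be justified by a ``bounded-height argument on $c$'' of the kind you sketch. Nothing constrains $c$ to a finite set: for example, relations of the form $n x_1 - x_2\in A_{\tor}$ for arbitrary $n\in\Z$ give infinitely many primitive $c$, each producing a distinct translate $H_c+t$, and each such translate can meet a fixed residue disk. Requiring $\phi_c$ to send $\log(Y^m\cap]\overline{z}[)$ into a fixed coset in $\Lie(A)^{m-d}$ constrains the \emph{image} but not the integer matrix $c$ up to scaling, since unboundedly large $c$ can still define subspaces close to a fixed one $p$-adically. Establishing the needed finiteness is essentially a form of the Mordell--Lang statement one is trying to prove, so your argument is circular at this point. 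Moreover, even with torsion finiteness from part (1)/(2), the union over all $(c,t)$ is a priori a countable union of proper subvarieties, which can be Zariski dense.

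The paper avoids this entirely by invoking the Ax--Schanuel theorem for abelian varieties (a functional transcendence result, stated here as Theorem~\ref{thm:ax}): one works in the formal completion at a point of $]\overline{z}[$, takes $A=J^{mn}$, $Z$ the image of $X^{mn}$ (dimension $mn$), and $V\subset\Lie(A)$ the preimage of $\mathbb{D}_d$ (codimension $(ng-d)(m-d)$). If $F^{-1}(\mathbb{D}_d)$ were Zariski dense, a component of $\Delta\cap(\widehat Z\times\widehat V)$ would have dimension $mn > mn-(ng-d)(m-d)$, and Ax--Schanuel would then force $X^{mn}$ into a proper abelian subvariety of $J^{mn}$, a contradiction since $X$ generates $J$. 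This single transcendence input replaces the (unprovable-by-your-means) finiteness of $(c,t)$; you need something of this strength, and the minor-vanishing setup alone does not supply it.
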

 
The proof of this proposition will take up the remainder of this section. We follow the strategy of the proof of Proposition 1 of \cite{dogra}. Choose a basis $\omega _1 ,\ldots \omega _g $ of $H^0 (X,\Omega )$. We have a rigid analytic map
\[
F :]\overline{z}[(\Q _p ^{\mathrm{nr}})\to \Mat _{gn,m}(\Q _p ^{\mathrm{nr}})
\]
given by sending $(z_{ij})$ to $(\int ^{z_{ij}}\omega _k )$. Under this map, the rank $\leq d$ locus gets sent to the the subvariety $\mathbb{D}_d $ of rank $\leq d$ matrices in $\Mat _{gn,m}$, which has codimension $(ng-d)(m-d)$ (see e.g. \cite{eisenbud}). We aim to prove that the preimage of $\mathbb{D}_d$ under $F $ is not Zariski dense in $X^{mn}$. Since $F $ is a rigid analytic map on an affinoid open inside $X^{mn}$, it will be enough to prove this in the formal completion of a given point $(z_{ij})$. This will be a consequence of the Ax--Schanuel theorem for abelian varieties.
\begin{theorem}[Ax,\cite{ax1972some}]\label{thm:ax}
Let $A$ be an abelian variety over a field $K$ of characteristic zero, $Z\subset A$ an irreducible subvariety of dimension d and $V\subset \Lie (A)$ a subvariety of the Lie algebra of codimension $e$. Let 
\[
\Delta \subset \widehat{A}\times \widehat{\Lie}(A)
\]
denote the graph of the logarithm map on the formal group. Let $W$ be an irreducible component of $\Delta \cap \widehat{Z}\times \widehat{V}$ of dimension $c$. If $c>d-e$, then $\pi (W)$ is contained in the formal completion of a proper abelian subvariety of $A$, where 
\[
\pi :\widehat{A}\times \widehat{\Lie }(A)\to \widehat{A}
\]
denote the projection.
\end{theorem}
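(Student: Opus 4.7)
The strategy is a proof by contradiction using Theorem \ref{thm:ax} applied to the formal logarithm on $A^m = J^{nm}$. Suppose that $Y(\Q _p ^{\mathrm{nr}})^{m}_{\rk \leq d}\cap ](\overline{z}_{ij})[(\Q _p ^{\mathrm{nr}})$ is Zariski dense in $Y^m_{\Q_p^{\nr}}$. Since every such point lies in $F^{-1}(\mathbb{D}_d)$ and $F$ is the restriction of a rigid analytic map on the residue polydisc, the Zariski closure of $F^{-1}(\mathbb{D}_d)$ in $Y^m$ must equal $Y^m$. Taking the formal completion at a suitably chosen point $P$ of the residue polydisc, the formal analytic subset cut out by $F^{-1}(\mathbb{D}_d)$ then contains an irreducible formal subscheme of full dimension $mn = \dim Y^m$.

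Next I would translate this into the Ax--Schanuel framework. Up to constants of integration, the Coleman integrals $\int^{z_{ij}}\omega_k$ assembling $F$ are the components of the formal logarithm $\log : \widehat{A^m} \to \widehat{\Lie}(A^m)$ composed with the Abel--Jacobi embedding $X^{nm} \hookrightarrow J^{nm}=A^m$. After translating $P$ to the origin on $A^m$, the locus $F^{-1}(\mathbb{D}_d)$ is identified with the image under $\pi$ of $\Delta \cap \widehat{Y^m} \times \widehat{\mathbb{D}_d}$ in the notation of Theorem \ref{thm:ax}. With $Z := Y^m$ of dimension $mn$ and $V := \mathbb{D}_d$ of codimension $e = (gn-d)(m-d)$, the Zariski-density hypothesis produces an irreducible component $W$ of dimension $c = mn$. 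Since the proposition is vacuous unless $(gn-d)(m-d)>0$, we may assume $e>0$, hence $c > \dim Z - e$, and Theorem \ref{thm:ax} forces $\pi(W)\subset \widehat{B}$ for some proper abelian-subvariety-translate $B\subsetneq A^m$ passing through $P$.

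The last step is to derive a global contradiction. As $\pi(W) = \widehat{Y^m}_P$ has dimension $mn = \dim Y^m$, the inclusion $\widehat{Y^m}_P \subset \widehat{B}_P$ together with the irreducibility of $Y^m$ forces $Y^m \subset B$ as closed subvarieties of $A^m$. But $Y^m = X^{nm}$ cannot lie in any translate of a proper abelian subvariety $B^0 \subsetneq J^{nm}$: given two points of $X^{nm}$ differing only in the $i$th coordinate, their difference $(0,\ldots,x-x',\ldots,0)$ lies in $B^0$, and varying using that $X-X$ generates $J$ as an abelian variety (because $X$ generates $J$), one obtains $\{0\}^{i-1}\times J\times \{0\}^{nm-i}\subset B^0$ for every $i$, whence $B^0 = J^{nm}$, contradicting properness.

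The principal obstacle is the bookkeeping needed to match the Coleman-integral map $F$ with the formal logarithm on $A^m$ in a form directly usable with Theorem \ref{thm:ax}, together with the passage from Zariski-density of a subset of the residue polydisc to the existence of an irreducible formal component of dimension $mn$ in $\widehat{Y^m}_P$; once these are in place, the rest is a dimension count and the elementary observation that $X^{nm}$ generates $J^{nm}$.
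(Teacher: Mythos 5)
There is a fundamental mismatch here: the statement you were asked to prove is Theorem \ref{thm:ax} itself, i.e.\ Ax's theorem on the graph of the formal logarithm of an abelian variety. The paper does not prove this theorem; it quotes it from Ax's 1972 paper and uses it as a black box. Your proposal does not prove it either --- instead you prove the \emph{subsequent} unnumbered Proposition (that $Y(\Q_p^{\mathrm{nr}})^m_{\rk\le d}\cap\,](\overline z_{ij})[$ is not Zariski dense), and your very first sentence invokes ``Theorem \ref{thm:ax}'' as an ingredient. Read as a proof of Theorem \ref{thm:ax}, the argument is therefore circular: you assume the conclusion you were meant to establish. Nothing in the proposal engages with what Ax's theorem actually asserts --- a transcendence statement comparing the dimension $c$ of a component $W$ of $\Delta\cap(\widehat Z\times\widehat V)$ with $d-e$, whose proof goes through differential algebra (derivations on the function field, the logarithmic derivative landing in $\Lie(A)$, and an analysis of the stabilizer of $\pi(W)$), none of which can be recovered from the downstream application.

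Setting that aside, the content you do supply is essentially the paper's own deduction of the Proposition from Theorem \ref{thm:ax}: take $A=J^{mn}$, $Z$ the image of $X^{mn}$ under the Abel--Jacobi map based at $(z_{ij})$, $V$ the translated preimage of $\mathbb{D}_d$ of codimension $e=(ng-d)(m-d)\ge \dim Z$, identify $F^{-1}(\mathbb{D}_d)$ near the basepoint with $\pi(\Delta\cap(\widehat Z\times\widehat V))$, and conclude via the fact that $X^{nm}$ generates $J^{nm}$ so cannot lie in a translate of a proper abelian subvariety. That part is fine and matches the paper. But to answer the actual task you would need to prove Ax's theorem itself, which is a genuinely different (and much deeper) undertaking.
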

In particular, if $Z$ generates $A$, and $e\geq d$, then $\pi (Z)$ is not Zariski dense in $X$. Note that if $K$ is a $p$-adic field, and $\log :A(K)\to \Lie (A)$ denotes the Coleman integral map, then the restriction
\[
\widehat{\log }:\widehat{A}\to \widehat{\Lie }(A)
\] 
of $\log $ to formal completions of the identity is equal to the logarithm map on formal groups. Hence if $W$ is a subvariety of $\Lie (A)$ containing $0$, and $V$ is a subvariety of $A$ containing the identity, then the preimage of $\widehat{W}\subset \widehat{\Lie }(A)$ in $\widehat{V}\subset \widehat{A}$ is equal to $\pi (\Delta \cap (\widehat{V}\times \widehat{W}))$.

For the sake of completeness, we spell out how to apply this theorem in our case of interest. Take $A=J^{mn}$. Map $X^{mn}$ into $J^{mn}$ via the basepoint $(z_{ij})$ and take $Z$ to be the image of $X^{mn}$. Let $V\subset \Lie (J)^{mn}$ denote the preimage of $\mathbb{D}_d$ under the composite isomorphism
\[
\Lie (J)^{mn}\to \Lie (J)^{mn} \to \Mat _{gn,m}
\]
where the righthand map is the isomorphism determined by the basis $(\omega _i )$ and the lefthand map is given by translation by $\log (\iota (z_{ij}))$. The preimage of the formal completion of $\mathbb{D}_d$ at $\log (z_{ij})$ under the map $F $ is then identified with $\pi (\Delta \cap (\widehat{Z}\times \widehat{V}))$, hence Theorem \ref{thm:ax} implies that $F ^{-1}(\mathbb{D}_d)$ is not Zariski dense.
\bibliography{bib_ZP}
\bibliographystyle{alpha}

\end{document}